\theoremstyle{plain}
\newtheorem{defi}{Definition}[section]
\newtheorem{thm}{Theorem}[section]
\newtheorem{prop}{Proposition}[section]
\newtheorem{lem}{Lemma}[section]
\theoremstyle{remark}
\DeclareMathOperator{\Gal}{Gal}
\DeclareMathOperator{\Aut}{Aut}
\DeclareMathOperator{\Jac}{Jac}
\DeclareMathOperator{\sw}{sw}
\DeclareMathOperator{\Sw}{Sw}
\DeclareMathOperator{\Hom}{Hom}
\DeclareMathOperator{\codim}{codim}
\DeclareMathOperator{\Id}{Id}
\DeclareMathOperator{\Cen}{Z}
\DeclareMathOperator{\Ker}{Ker}
\DeclareMathOperator{\md}{mod \;}
\newcommand{\N}{\mathbb{N}}
\newcommand{\M}{M}
\newcommand{\Z}{\mathbb{Z}}
\newcommand{\Q}{\mathbb{Q}}
\newcommand{\F}{\mathbb{F}}
\newcommand{\Qpur}{\mathbb{Q}_p^{\rm ur}}
\newcommand{\Proj}{\mathbb{P}^1}
\renewcommand{\L}{L}
\newcounter{mysub}
\begin{document}
\title{Lifting Artin-Schreier covers with maximal wild monodromy}
\author{P. Chrétien}

\maketitle

\begin{abstract}
Let $k$ be an algebraically closed field  of characteristic $p > 0$. We consider the  problem of lifting $p$-cyclic covers of $\Proj_k$ as $p$-cyclic covers of the projective line over some DVR  under the condition that the wild monodromy is maximal. We answer positively the question for covers birational to $w^ p-w=tR(t)$ for some additive polynomial $R(t)$.
\end{abstract}

\section{Introduction}
\label{sec:0}
Let $(R,v)$ be a complete discrete valuation ring of mixed characteristic $(0,p)$ with fraction field $K$ containing a primitve $p$-th root of unity $\zeta_p$ and algebraically closed residue field $k$. The stable reduction theorem states that given a smooth, projective, geometrically connected curve $C/K$ of genus $g(C) \geq 2$, there exists a unique minimal Galois extension $\M/K$ called \textsl{the monodromy extension of $C/K$} such that $C_{\M}:=C \times \M$ has stable reduction over $\M$. The group $G =  \Gal(\M/K)$ is the \textsl{monodromy group of $C/K$}.

\indent Let us consider the case where  $\phi : C \to \Proj_K$ is a $p$-cyclic cover. Let $\mathcal{C}$ be the stable model of $C_{\M}/\M$ and $\Aut_k(\mathcal{C}_k)^{\#}$ be the subgroup of $\Aut_k(\mathcal{C}_k)$ of elements acting trivially on the reduction in $\mathcal{C}_k$ of the ramification locus of $\phi \times \Id_M : C_{\M}\to \Proj_{\M}$ (see \cite{Liu} 10.1.3 for the definition of the reduction map of $C_{\M}$). One derives from the stable reduction theorem the following injection 
\begin{equation}\label{injintro}
\Gal(\M/K) \hookrightarrow \Aut_k(\mathcal{C}_k)^{\#}.
\end{equation}
When the $p$-Sylow subgroups of these groups are isomorphic, one says that the \textsl{wild monodromy is maximal}. We are interested in realization of smooth covers as above such that the $p$-adic valuation of $|\Aut_k(\mathcal{C}_k)^{\#}|$ is large compared to the genus of $\mathcal{C}_k$ and having maximal wild monodromy. Moreover, we will study the ramification filtration and the Swan conductor of their monodromy extension.

Recall that a big action is a pair $(X,G)$ where $X/k$ is a smooth, projective, geometrically connected curve of genus $g(X) \geq 2$ and $G$ is a finite $p$-group of $k$-automorphisms of $X/k$ such that $|G|>\frac{2p}{p-1}g(X)$. According to \cite{LM} Theorem 1.1 II f), if $(X,G)$ is a big action, then one has that $|G| \leq \frac{4p}{(p-1)^ 2}g(X)^2$ with equality if and only if $X/k$ is birationally given by $w^p-w=tR(t)$ where $R(t) \in k[t]$ is an additive polynomial. In this case, $G$ is an extra-special $p$-group and equals the $p$-Sylow subgroup $G_{\infty,1}(X)$ of the subgroup of $\Aut_k(X)$ leaving $t=\infty$ fixed.

This motivates the following question, with the above notations, given a big action $(C,G)$ such that $|G| = \frac{4p}{(p-1)^ 2}g(X)^2$,  is it possible to find a field $K$ and a a $p$-cyclic cover $C/K$ of $\Proj_K$   such that $\mathcal{C}_k \simeq X$, that $G \simeq \Aut(\mathcal{C}_k)_1^{\#}$ is a $p$-Sylow subgroup of $\Aut(\mathcal{C}_k)^{\#}$ and the curve $C/K$ has maximal wild monodromy ?\\

Let $n \in \N^{\times}$, $q=p^n$, $\lambda= \zeta_p-1$ and ${K=\Qpur (\lambda^{1/(1+q)})}$. For any additive polynomial $R(t) \in k[t]$ of degree $q$, let $X/k$ be curve defined by ${w^p-w=tR(t)}$. In section \ref{sec:2}, we prove the following

\begin{thm}\label{prop1intro}
There exists a $p$-cyclic cover $C/K$ of $\Proj_K$ such that ${\mathcal{C}_k \simeq X}$, one has $G_{\infty,1}(X) \simeq \Aut(\mathcal{C}_k)_1^{\#}$ and the curve $C/K$ has maximal wild monodromy $M/K$. The extension $M/K$ is the decomposition field of an explicitly given polynomial and the group ${\Gal(M/K) \simeq \Aut_k(\mathcal{C}_k)^{\#}_1}$ is an extra-special $p$-group of order $pq^2$.
\end{thm}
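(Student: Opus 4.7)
The plan is to produce an explicit Kummer-type lift of $X$, determine its branch divisor, and then recover $\Gal(M/K)$ as a Galois action on parameters measuring the translations appearing in $G_{\infty,1}(X)$.

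\medskip

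\textbf{Step 1 (explicit lift).} Since $\zeta_p \in K$, every $p$-cyclic cover of $\Proj_K$ is of Kummer form $y^p = F(t)$. I would look for $F(t) \in R[t]$ of degree $q+1$ such that the substitution $w = (y-1)/\lambda$ transforms $y^p = F(t)$ into an equation whose reduction modulo the maximal ideal of $R$ is the Artin--Schreier equation $w^p - w = tR(t)$. Because $p = u\lambda^{p-1}$ for a unit $u$, expanding $(1+\lambda w)^p = 1 + \lambda^p w^p + (\text{terms in } (p\lambda))$ forces $F(t)$ to have the shape $1 + \lambda^p\,tR(t)$ up to a correction of valuation $\geq p$. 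A concrete candidate is a product $F(t) = \prod_{i=0}^{q}(1 - \lambda t \alpha_i)$ with the $\alpha_i$ chosen so that the intermediate elementary symmetric functions vanish in $k$, leaving only the $(q+1)$-th power term $(-\lambda t)^{q+1}$ and the correct lower-order coefficients matching the additive polynomial $R(t)$. Let $C/K$ denote the cover so produced.

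\medskip

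\textbf{Step 2 (stable model).} By Riemann--Hurwitz, $\phi:C\to\Proj_K$ has $q+2$ branch points (the $q+1$ zeros of $F$ together with $\infty$), all of which coalesce to $\infty$ on the special fiber. Following the method used by Henrio--Matignon for Artin--Schreier lifts, I would separate these branch points by an admissible sequence of blow-ups of $\Proj_R$ performed after a base change to some $L \supset K$, and show that the minimal $L$ over which the resulting semistable model has special fiber isomorphic to $X$ is exactly the splitting field $M$ of an explicit polynomial $P(T) \in K[T]$. The polynomial $P(T)$ is built from the condition that the translation $t \mapsto t + \tilde a$ preserve the class of $F(t)$ modulo $K(t)^{\times p}$, i.e. modulo Kummer equivalence.

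\medskip

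\textbf{Step 3 (identification of the monodromy).} By \cite{LM}, the group $G_{\infty,1}(X)$ is extra-special of order $pq^2$ and consists of automorphisms of the form $(t,w)\mapsto(t+a,\,w+a\,h(t)+b)$ where $a$ ranges over an $\F_p$-subspace of $k$ of dimension $2n$ and $b \in \F_p$. I would show that each such $a$ lifts uniquely to a root $\tilde a\in K^{\mathrm{alg}}$ of $P(T)$, so that $\Gal(M/K)$ permutes these roots exactly according to the group law of $G_{\infty,1}(X)$. Counting gives $|\Gal(M/K)| = pq^2 = |\Aut_k(\mathcal{C}_k)_1^{\#}|$, and the injection \eqref{injintro} is then an isomorphism on $p$-Sylow subgroups, which is the claimed maximality of the wild monodromy.

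\medskip

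\textbf{Main obstacle.} The technical heart is step 2, and more precisely the verification that $K = \Qpur(\lambda^{1/(1+q)})$ already supplies the correct amount of tame ramification. The exponent $1+q$ is dictated by the fact that the $q+1$ finite zeros of $F(t)$ lie on a circle of $t$-adic radius $\lambda^{1/(q+1)}$, so a single $(1+q)$-th root of $\lambda$ is just enough to separate them. Checking this, and then showing that the wild part of $M/K$ produces exactly the $pq^2$ lifts of translations in $G_{\infty,1}(X)$ and no spurious extension, demands a careful valuation analysis of $y^p = F(t + \tilde a)$ after translation: one must track both the valuations of the coefficients of $F$ and the auxiliary $p$-th-power adjustments needed to keep the Kummer equation in integral form. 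Once these valuation computations are in place, the match with the extra-special structure described by Lehr--Matignon is essentially formal.
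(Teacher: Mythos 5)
Your overall strategy---an explicit Kummer lift $y^p=F(t)$ whose reduction after a change of variables is the Artin--Schreier equation, together with a ``monodromy polynomial'' $P(T)$ whose roots parametrize the translations in $G_{\infty,1}(X)$---is the same as the paper's (Theorem \ref{maintheo}, where $f_{c,\rho}$ plays the role of your $F$ and $L_{c,\rho}$ that of your $P$). But there are two genuine gaps. First, maximality of the wild monodromy requires a \emph{lower} bound $[M:K]\geq pq^2$, and hence that $\Gal(M/K)$ act transitively on the roots of $P$, i.e.\ that $P$ be irreducible over $K$. Your Step 3 only produces an injection from the set of roots to the set of translations, which bounds the monodromy from above; nothing in your sketch forces the Galois orbit to be large. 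In the paper this is the single hardest step (\textbf{Step A}): one constructs an explicit element $t\in K(y)$ from $y^{q^2/p}$ and correction terms with $pv(t)=v(a_ny)$ and deduces $q^2\mid [K(y):K]$ from $v(p^{q^2}t^{-(p-1)(q+1)})$. Second, your counting does not reach $pq^2$: the polynomial $P=L_{c,\rho}$ has degree $q^2$ and its roots only account for the quotient $G/{\rm Z}(G)\simeq(\Z/p\Z)^{2n}$ of translations; the central $\Z/p\Z$ comes from the extra Kummer generator $f_{c,\rho}(y)^{1/p}$, so $M=K(y,f_{c,\rho}(y)^{1/p})$ strictly contains the splitting field of $P$. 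The paper closes this by a Frattini argument: the composite $\Gal(M/K)\hookrightarrow \Aut_k(\mathcal{C}_k)_1^{\#}\to \Aut_k(\mathcal{C}_k)_1^{\#}/{\rm Z}$ is surjective because its image contains the $q^2$ translations $t\mapsto t+\overline{(y_i-y_1)/\lambda^{p/(1+q)}}$, and since ${\rm Z}=\Phi(\Aut_k(\mathcal{C}_k)_1^{\#})$, surjectivity modulo the Frattini subgroup forces surjectivity. Some such mechanism is indispensable and absent from your proposal.

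A smaller but real discrepancy: your justification of the exponent $1+q$ (the $q+1$ zeros of $F$ lying on a circle of radius $\lambda^{1/(q+1)}$) does not reflect the actual mechanism. In the paper the branch points are units in \emph{equidistant geometry} ($v(x_i-x_j)=0$), which is needed to invoke Raynaud's theorem, obtain the original component $D_0$, and hence the injection into $\Aut_k(\mathcal{C}_k)^{\#}$; the radius $\lambda^{p/(1+q)}$ is instead that of the disk centered at a root $y$ of the monodromy polynomial on which good reduction occurs, and the exponent is dictated by the conductor $1+q$ of the Artin--Schreier cover at infinity. Your candidate $F(t)=\prod_i(1-\lambda t\alpha_i)$ with coalescing zeros would put you outside this framework, and the precise (fractional) valuations $v(\rho_k)=\frac{p(q-p^k)}{1+q}v(\lambda)$ of the intermediate coefficients---not merely their vanishing in $k$---are what make the reduction come out as $w^p-w=\sum\bar u_kt^{1+p^k}+t^{1+q}$.
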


The group $G_{\infty,1}(\mathcal{C}_k)=\Aut_k(\mathcal{C}_k)^{\#}_1$ is endowed with the ramification filtration $(G_{\infty,i}(\mathcal{C}_k))_{i\geq 0}$  which is easily seen to be :
\[ G_{\infty,0}(\mathcal{C}_k) = G_{\infty,1}(\mathcal{C}_k)\supsetneq {\rm Z}(G_{\infty,0}(\mathcal{C}_k))= G_{\infty,2}(\mathcal{C}_k) = \dots = G_{\infty,1+q}(\mathcal{C}_k) \supsetneq \lbrace 1 \rbrace. \]
Moreover, $G:=\Gal(\M/K)$ being the Galois group of a finite extension of $K$, it is endowed with the ramification filtration $(G_i)_{i \geq 0}$. Since ${G \simeq G_{\infty,1}(\mathcal{C}_k)}$ it is natural to ask for the behaviour of $(G_i)_{i \geq 0}$ under \eqref{injintro}, that is to compare $(G_i)_{i \geq 0}$ and  $(G_{\infty,i}(\mathcal{C}_k))_{i\geq 0}$. In the general case, the arithmetic is quite tedious due to the expression of the lifting of $X/k$. Actually we could not obtain a numerical example for the easiest case when $p=3$. Nonetheless, when $p=2$, one computes the conductor exponent $f(\Jac(C)/K)$ of $\Jac(C)/K$ and its Swan conductor $\sw(\Jac(C)/K)$ :

\begin{thm}\label{prop2intro}
Under the hypotheses of Theorem \ref{prop1intro}, if $p=2$ the lower ramification filtration of $G$ is :
\[ G= G_0 = G_1 \supsetneq   {\rm Z}(G)= G_2 = \dots = G_{1+q} \supsetneq \lbrace 1 \rbrace. \]
Then, $f(\Jac(C)/K)=2q+1$ and  $\sw(\Jac(C)/\Q_2^{\rm ur})=1$.
\end{thm}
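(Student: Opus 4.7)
The strategy has three parts: determine the lower ramification filtration of $G = \Gal(M/K)$ from the explicit description given by Theorem \ref{prop1intro}, and then derive the two conductors by applying standard formulas to the $G$-representation on $H^1(C_{\bar{K}},\Q_\ell)$.

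The heart of the argument is the ramification computation, and this is where the main obstacle lies. Since $G$ is extra-special of order $pq^2 = 2q^2$, I would analyse $M/K$ through the tower $K \subseteq L \subseteq M$ with $\Gal(M/L) = \Cen(G) \simeq \Z/2\Z$ and $\Gal(L/K) = G/\Cen(G) \simeq (\Z/2\Z)^{2n}$. The polynomial produced by Theorem \ref{prop1intro} defines $M$ explicitly; for $p=2$ its factors are of Artin-Schreier type, so the valuations of their roots can be tracked along the two layers. Computing a uniformiser at each stage yields the unique lower ramification break of $L/K$ and the unique break of the Artin-Schreier extension $M/L$, and assembling them via the Herbrand function $\varphi_{M/L}$ should give
\[ G = G_0 = G_1 \supsetneq \Cen(G) = G_2 = \dots = G_{1+q} \supsetneq \{1\}. \]
The breaks, at $1$ and $1+q$, match those of the filtration $(G_{\infty,i}(\mathcal{C}_k))_{i \geq 0}$ recalled just before the statement; the content of this step is that the wild ramification of the lifting is rigid enough to reproduce them on the nose. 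The paper notes that the analogous calculation is intractable already for $p = 3$, so the simplifications available in characteristic two are essential.

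Set $V = H^1(C_{\bar{K}},\Q_\ell)$ for some prime $\ell \neq 2$. Since $C/K$ acquires good reduction over $M$, the inertia acts through $G$ and there is a $G$-equivariant identification $V \simeq H^1(\mathcal{C}_k, \Q_\ell)$; in particular $\dim_{\Q_\ell} V = 2g(\mathcal{C}_k) = q$. The generator $w \mapsto w+1$ of $\Cen(G)$ realises $\mathcal{C}_k \to \Proj_k$ as an Artin-Schreier cover, so $\mathcal{C}_k / \Cen(G) \simeq \Proj_k$, which forces $V^{\Cen(G)} = H^1(\Proj_k,\Q_\ell) = 0$ and \emph{a fortiori} $V^G = 0$. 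Plugging $\codim V^{G_i} = q$ for every index appearing into
\[ f(\Jac(C)/K) = \sum_{i \geq 0} \frac{|G_i|}{|G_0|}\, \codim V^{G_i} \]
gives $f = q + q + q \cdot (1/q^2) \cdot q = 2q+1$.

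For the Swan conductor over $\Q_2^{\rm ur}$, observe that $1+q$ is odd and $\lambda = -2$ has valuation $1$, so $K/\Q_2^{\rm ur}$ is tamely totally ramified of degree $1+q$. Consequently $G$ coincides with the wild inertia of $H := \Gal(M/\Q_2^{\rm ur})$, and $H_i = G_i$ for all $i \geq 1$ while $|H_0| = (1+q)|G|$. The Swan formula then yields
\[ \sw(\Jac(C)/\Q_2^{\rm ur}) = \sum_{i \geq 1} \frac{|H_i|}{|H_0|}\, \codim V^{H_i} = \frac{q}{1+q} + \frac{1}{1+q} = 1, \]
equivalently $\sw_{\Q_2^{\rm ur}} = \sw_K/(1+q) = 1$, as claimed.
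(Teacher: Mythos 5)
Your second and third paragraphs are fine and essentially reproduce the paper's Step G: the vanishing $V^{{\rm Z}(G)}=0$ via $\mathcal{C}_k/{\rm Z}(G)\simeq\Proj_k$, the formula $f=\codim V^{G_0}+\sum_{i\geq 1}\tfrac{|G_i|}{|G_0|}\codim V^{G_i}$, and the tame rescaling by $[K:\Q_2^{\rm ur}]=1+q$ all check out --- \emph{granted the filtration}. The problem is that the filtration itself is never proved. Your first paragraph correctly identifies the tower $K\subseteq L\subseteq M$ with $\Gal(M/L)={\rm Z}(G)$, but then asserts that ``tracking valuations'' and ``assembling via Herbrand'' \emph{should give} the breaks $1$ and $1+q$. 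That determination is the entire content of the theorem and is not a routine verification. Two things are missing. First, to get $\Gal(L/K)_2=\{1\}$ one must produce a uniformizer of $L$ on which the Galois action can be read off: the paper constructs an explicit $t\in L$ with $pv(t)=v(a_ny)$ (so $\pi_K^{q^2}/t$ uniformizes $L$) and proves $v(\sigma(t)-t)=q^2v(\pi_K)$ for every nontrivial $\sigma$, a computation that ultimately rests on the prolongation criterion for translations of $\Proj_k$ to $A_u/k$ (Proposition \ref{lem:prolon}), not on any Artin--Schreier structure of the monodromy polynomial, which is irreducible of degree $q^2$ over $K$.

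Second, and more seriously, the break of $M/L$ must be pinned to exactly $1+q$, and this needs a two-sided argument that your outline does not contain. The extension $M/L$ is Kummer, generated by $f_{\rho}(y)^{1/2}$; the upper bound $v_M(\mathcal{D}_{M/L})\leq q+2$ requires exhibiting a unit $u\in L$ making $v_L(f_{\rho}(y)u^2-1)$ as large as possible (the paper's Step E, a delicate explicit construction), followed by a different-versus-valuation lemma and a parity constraint. The matching lower bound is not a valuation computation at all: it combines Lemma \ref{maurizio} (Hasse--Arf applied through the maximal abelian subgroup of the extra-special group, forcing the break into $1+q\N$) with the observation that a break at $1$ would make $G\simeq G_1/G_2$ abelian, contradicting extra-specialness. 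Without these ingredients the equality $G_2=\dots=G_{1+q}={\rm Z}(G)$, and hence both conductor values, remain unestablished.
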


\noindent \textbf{Remarks :}
\begin{enumerate}
\item In Theorem \ref{prop1intro}, one actually obtains a family of liftings $C/K$ of $X/k$ with the announced properties. It is worth noting that there are finitely many additive polynomials $R_0(t) \in k[t]$ such that $w^p-w=tR(t)$ is $k$-isomorphic to $w^p-w=tR_0(t)$ (see \cite{LM} 8.2), so we have to solve the problem in a somehow generic way. In \cite{CM}, we obtain the analogous of Theorem \ref{prop1intro} and Theorem \ref{prop2intro} for $p \geq 2$ in the easier case $R(t)=t^q$.
\item For $p=3$, the easiest non-trivial case is such that ${[M:K]=243}$, that is why we could not even do computations using Magma to guess the behaviour of the ramification filtration of the monodromy extension for $p > 2$. Nonetheless, one shows that if $p \geq 3$, the lower ramification filtration of $G$ is
\[ G= G_0 = G_1 \supsetneq G_2 = \dots = G_{u} = {\rm Z}(G) \supsetneq \lbrace 1 \rbrace ,\]
where $u \in 1+q\N$.
\item The value $\sw(\Jac(C)/\Q_2^{\rm ur}) =1$ is the smallest one among abelian varieties over $\Q_2^{ur}$ with non tame monodromy extension. That is, in some sense, a counter part of \cite{Br-Kr} and \cite{LRS} where an upper bound for the conductor exponent is given and it is shown that this bound is actually achieved.
\end{enumerate}

\section{Background}
\label{sec:1}
\noindent \textbf{Notations.} Let $(R,v)$ be a complete discrete valuation ring (DVR) of mixed characteristic $(0,p)$ with fraction field $K$ and algebraically closed residue field $k$. We denote by $\pi_K$ a uniformizer of $R$ and assume that $K$ contains a primitive $p$-th root of unity $\zeta_p$. Let $\lambda:=\zeta_p-1$. If $\L/K$ is an algebraic extension, we will denote by $\pi_{\L}$(resp. $v_{\L}$, resp. $\L^{\circ}$) a uniformizer for $\L$ (resp. the prolongation of $v$ to  $\L$ such that $v_{\L}(\pi_{\L})=1$, resp. the ring of integers of $\L$). If there is no possible confusion we note $v$ for the prolongation of $v$ to an algebraic closure $K^{\rm  alg}$ of $K$.\\

\addtocounter{mysub}{1}
\arabic{mysub}. \textit{Stable reduction of curves.} \label{mysub:reduction} The first result is due to Deligne and Mumford (see for example \cite{Liu} for a presentation following Artin and Winters).
\begin{thm}[Stable reduction theorem]\label{stableth}
Let $C/K$ be a smooth, projective, geometrically connected curve over $K$ of genus $g(C) \geq 2$. There exists a unique finite Galois extension $\M/K$ minimal for the inclusion relation such  that $C_{\M}/\M$ has stable reduction. The stable model $\mathcal{C}$ of $C_{\M}/\M$ over $\M^{\circ}$ is unique up to isomorphism. One has a canonical injective morphism :
\begin{equation}\label{injcano}
\Gal(\M/K) \overset{i}{\hookrightarrow} \Aut_k(\mathcal{C}_k).
\end{equation}
\end{thm}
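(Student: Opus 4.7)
The plan is to establish the three assertions of the theorem---existence of stable reduction after finite base change, uniqueness of the stable model, and the canonical injection~\eqref{injcano}---following the strategy of Deligne--Mumford via the Jacobian, which is equivalent to the Artin--Winters approach through regular models presented in Liu.

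\textbf{Existence.} I would first invoke Grothendieck's semi-stable reduction theorem for abelian varieties applied to $J=\Jac(C)$: there is a finite Galois extension $L/K$ over which $J_L$ acquires semi-abelian reduction. I would then apply the theorem (a cornerstone of Deligne--Mumford) that $C$ has semi-stable reduction over a field if and only if $\Jac(C)$ does. Starting from any regular semi-stable model of $C_L/L^\circ$, I would produce the stable model by iteratively contracting the exceptional $(-1)$-curves and the chains of rational $(-2)$-curves meeting at most two other components; the assumption $g(C)\geq 2$ ensures that this process terminates with an honestly stable curve, and the resulting model is flat and proper over $L^\circ$.

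\textbf{Uniqueness, minimality and the injection.} Uniqueness of the stable model over $L^\circ$ follows from the rigidity of stable curves: two stable models share a common regular semi-stable resolution whose further contractions to each are canonically determined, so they are uniquely isomorphic. By this uniqueness, the natural semi-linear $\Gal(L/K)$-action on $C_L$ extends uniquely to the stable model $\mathcal{C}_L$, and hence to $\mathcal{C}_{L,k}$. Setting $H := \Ker(\Gal(L/K)\to \Aut_k(\mathcal{C}_{L,k}))$ and $M := L^H$, the fact that the automorphism scheme of a stable curve is étale over its base---equivalently, automorphisms trivial on the special fiber are globally trivial---makes the descent datum on $\mathcal{C}_L$ trivial along $L/M$, so by effective Galois descent the stable model descends to $M^\circ$, showing that $C_M/M$ already has stable reduction. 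An analogous descent argument applied to any Galois $L'/K$ over which $C$ has stable reduction shows $L'\supseteq M$; this gives minimality of $M$, and, tautologically by the choice of $H$, the injection~\eqref{injcano}.

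\textbf{Main obstacle.} The deepest inputs are Grothendieck's semi-stable reduction theorem for abelian varieties and the equivalence between semi-stable reduction of $C$ and of $\Jac(C)$; both would be imported as black boxes from \cite{Liu}. The most delicate argument to carry out in detail is the descent step for minimality, whose success relies crucially on the rigidity/étaleness of the automorphism scheme of a stable curve: without this, an element of $\Gal(L/K)$ could act trivially on the special fiber yet nontrivially on $\mathcal{C}_L$, in which case the descent datum would fail to descend and the claimed minimality of $M$ would break down.
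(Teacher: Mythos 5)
This theorem appears in the paper purely as background: it is attributed to Deligne--Mumford, with a pointer to \cite{Liu} for the Artin--Winters presentation, and no proof is given, so there is no argument of record to compare your route against. Judged on its own, your sketch follows the standard line (Grothendieck's semi-stable reduction theorem for abelian varieties, the equivalence between semi-stable reduction of $C$ and of $\Jac(C)$, contraction of the unstable rational components, uniqueness of the stable model, unramifiedness of the automorphism functor of stable curves), and the logical architecture --- take $L/K$ Galois realizing stable reduction, set $H=\Ker(\Gal(L/K)\to\Aut_k(\mathcal{C}_{L,k}))$ and $\M=L^{H}$, and deduce minimality and the injection \eqref{injcano} together --- is the right one.

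The step you should not wave through is ``by effective Galois descent the stable model descends to $M^{\circ}$.'' When $L/\M$ is ramified (the typical case here), $L^{\circ}\otimes_{\M^{\circ}}L^{\circ}$ is not a product of copies of $L^{\circ}$, so a semi-linear $\Gal(L/\M)$-action on $\mathcal{C}_L$ is strictly weaker than an fppf descent datum for $\M^{\circ}\to L^{\circ}$; classical Galois descent is not available, and the naive quotient $\mathcal{C}_L/H$ need not be a semi-stable model of $C_{\M}$ (its special fiber picks up multiplicities because $\pi_{\M}$ is a unit times $\pi_L^{[L:\M]}$). The unramifiedness of the automorphism functor does give you uniqueness of the isomorphisms $\sigma^{*}\mathcal{C}_L\to\mathcal{C}_L$, hence the cocycle condition and a well-defined action on $\mathcal{C}_{L,k}$, but it does not repair the descent itself. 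The standard way to close this gap is to go back to the Jacobian: triviality of the $H$-action on $\mathcal{C}_{L,k}$ forces triviality on $\Pic^{0}(\mathcal{C}_{L,k})$ and on the component/graph data of the N\'eron model of $\Jac(C)_L$, hence every element of the inertia of $\M$ acts unipotently on $T_{\ell}(\Jac(C))$; Grothendieck's inertial criterion then yields semi-abelian reduction of $\Jac(C)$ over $\M$, and the curve--Jacobian equivalence you already invoked gives stable reduction of $C_{\M}$. With that substitution, your derivation of minimality and of the injectivity of \eqref{injcano} goes through.
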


\noindent \textbf{Remarks :}\begin{enumerate}
\item Let's explain the action of $\Gal(K^{\rm alg}/K)$ on $\mathcal{C}_k/k$. The group $\Gal(K^{\rm alg}/K)$ acts on $C_{\M}:=C\times \M$ on the right. By unicity of the stable model, this action extends to $\mathcal{C}$ :

\begin{center}
\begin{tikzpicture}[scale=1]
\node (CM1) at (-1,1) {$\mathcal{C}$};
\draw[->] (-0.5,1)--(0.5,1);
\node (CM2) at (1,1) {$\mathcal{C}$};
\node (sigma1) at (-0.05,1.2) {$\sigma$};
\draw[->] (-1,0.7)--(-1,-0.5);
\draw[->] (1,0.7)--(1,-0.5);
\node (R1) at (-1,-1) {$\M^{\circ}$};
\draw[->] (-0.5,-1)--(0.5,-1);
\node (R2) at (1,-1) {$\M^{\circ}$};
\node (sigma2) at (-0.05,-0.8) {$\sigma$};
\end{tikzpicture}
\end{center}

Since $k=k^{\rm alg}$ one gets $\sigma \times k = \Id_k$, whence the announced action. The last assertion of the theorem characterizes the elements of $\Gal(K^{{\rm alg}}/\M)$ as the elements of $\Gal(K^{{\rm  alg}}/K)$ that  trivially act on $\mathcal{C}_k/k$.

\item  If $p > 2g(C)+1$, then $C/K$ has stable reduction over a tamely ramified extension of $K$. We will study examples of covers with $p \leq 2g(C)+1$.

\item Our results will cover the elliptic case. Let $E/K$ be an elliptic curve with additive reduction. If its modular invariant is integral, then there exists a smallest extension $\M$ of $K$ over which $E/K$ has good reduction. Else $E/K$ obtains split multiplicative reduction over a unique quadratic extension of $K$ ( see \cite{Kr}).
\end{enumerate}

\begin{defi}
The extension $\M/K$ is the \textsl{monodromy extension of $C/K$}. We call $\Gal(\M/K)$ the \textsl{monodromy group of $C/K$}. It has a unique $p$-Sylow subgroup $\Gal(\M/K)_1$ called the \textsl{wild monodromy group}. The extension $\M/\M^{\Gal(\M/K)_1}$ is the \textsl{wild monodromy extension}.
\end{defi}

From now on we consider smooth, projective, geometrically integral curves $C/K$ of genus $g(C) \geq 2$ birationally given by ${Y^p=f(X):=\prod_{i=0}^t (X-x_i)^{n_i}}$ with  $(p,\sum_{i=0}^tn_i)=1$, $(p,n_i)=1$ and $\forall \; 0 \leq i \leq t, x_i \in R^{\times}$. Moreover, we assume that $\forall i \neq j, \; v(x_i-x_j)=0$, that is to say, the branch locus $B= \lbrace x_0, \dots , x_t, \infty \rbrace$ of the cover has \textsl{equidistant geometry}. We denote by $Ram$ the ramification locus of the cover.\\

\noindent \textbf{Remark :} We only ask $p$-cyclic covers to satisfy Raynaud's theorem 1' \cite{Ra} condition, that is the branch locus is $K$-rational with equidistant geometry. This has consequences on the image of \eqref{injcano}.

\begin{prop}\label{propD0}
Let $\mathcal{T}={\rm Proj}( \M^{\circ}[X_0,X_1])$ with $X=X_0/X_1$. The normalization $\mathcal{Y}$ of $\mathcal{T}$ in $K(C_{\M})$ admits a blowing-up $\tilde{\mathcal{Y}}$ which is a semi-stable model of $C_{\M}/\M$. The dual graph of $\tilde{\mathcal{Y}}_k/k$ is a tree and the points in $Ram$ specialize in a unique irreducible component $D_0 \simeq \Proj_k$ of $\tilde{\mathcal{Y}}_k/k$. There exists a contraction morphism $h : \tilde{\mathcal{Y}} \to \mathcal{C}$, where $\mathcal{C}$ is the stable model of $C_{\M}/\M$ and 
\begin{equation}\label{inj}
\Gal(\M/K) \hookrightarrow \Aut_k(\mathcal{C}_k)^{\#},
\end{equation}
where $\Aut_k(\mathcal{C}_k)^{\#}$ is the subgroup of $\Aut_k(\mathcal{C}_k)$ of elements inducing the identity on $h(D_0)$.
\end{prop}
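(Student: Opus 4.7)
The strategy is to construct $\mathcal{Y}$, resolve its singularities above the branch points by successive blow-ups to produce $\tilde{\mathcal{Y}}$, verify that the resulting semistable model has tree-shaped dual graph with a distinguished component $D_0$, and then exploit the trivial Galois action on $\mathcal{T}$ to refine \eqref{injcano}. This essentially specializes Raynaud's Th\'eor\`eme 1' of \cite{Ra} to our equidistant setting.

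First I would analyze $\mathcal{Y}$. Its generic fiber is $C_{\M}$, and on the special fiber the equidistant hypothesis $v(x_i-x_j)=0$ forces $\bar{x}_0,\dots,\bar{x}_t,\bar{\infty}$ to be distinct closed points of $\mathcal{T}_k \simeq \Proj_k$, with $\bar{f}(\bar{X}) = \prod(\bar{X}-\bar{x}_i)^{n_i} \in k[\bar{X}]$. Since the reduction $\bar{Y}^p = \bar{f}(\bar{X})$ is a purely inseparable cover of $\mathcal{T}_k$, the reduced special fiber $\mathcal{Y}_k^{\rm red}$ is isomorphic to $\Proj_k$ and $\mathcal{Y}_k^{\rm red} \to \mathcal{T}_k$ is the absolute Frobenius. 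The singularities of $\mathcal{Y}$ are concentrated above the branch points since $f$ is a unit elsewhere.

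Next I would construct $\tilde{\mathcal{Y}} \to \mathcal{Y}$ as a sequence of blow-ups of closed points of the special fiber, all lying above branch points. Because $\M/K$ is the monodromy extension, $C_{\M}$ has stable reduction, so finitely many blow-ups suffice to produce a semistable (indeed regular) model. The dual graph is a tree: each branch point $\bar{x}_i$ contributes a subtree of exceptional components attached to the strict transform $D_0 \simeq \Proj_k$ of $\mathcal{Y}_k^{\rm red}$ at $\bar{x}_i$, and attaching disjoint subtrees at distinct vertices of a single $\Proj_k$ yields a tree. Each ramification point $p_i \in \phi^{-1}(x_i) \subset C_{\M}$ extends to an $\M^{\circ}$-section of $\tilde{\mathcal{Y}}$, and the main technical step --- the expected main obstacle --- is the local analysis of the blow-up sequence (equivalently, of the corresponding rigid annuli) showing that all $t+2$ such sections specialize to distinct smooth points of the single component $D_0$.

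Finally, contracting the unstable rational components of $\tilde{\mathcal{Y}}_k$ produces a contraction $h:\tilde{\mathcal{Y}} \to \mathcal{C}$ onto the stable model. Since $D_0$ supports the reductions of $t+2$ ramification points and meets the other subtrees at $t+2$ nodes, it is not contracted, so $h(D_0)$ is an irreducible component of $\mathcal{C}_k$. The group $\Gal(\M/K)$ acts trivially on $\mathcal{T}$ because the coefficients $x_i$ lie in $K$ and the residue field of $\M$ is $k$; by functoriality of normalization, blow-up and contraction, this action lifts to $\tilde{\mathcal{Y}}$ and to $\mathcal{C}$, with $D_0 \to \mathcal{T}_k$ being Frobenius of degree $1$ or $p$, whose automorphism group over $\mathcal{T}_k$ is trivial. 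Hence $\Gal(\M/K)$ acts trivially on $D_0$ and therefore on $h(D_0)$, and combining with the injection from Theorem \ref{stableth} yields the refinement $\Gal(\M/K) \hookrightarrow \Aut_k(\mathcal{C}_k)^{\#}$.
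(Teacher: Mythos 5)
The paper itself offers no argument for this proposition --- its proof is the single line ``see \cite{CM}'' --- so there is nothing in the text to compare your route against; judged on its own terms, your plan has the right skeleton (radicial special fibre of $\mathcal{Y}$, semistable refinement by blow-ups, trivial Galois action on $\mathcal{T}_k$ transported to the radicial cover $D_0$ and then through the contraction $h$), but it is not yet a proof. The entire substance of the proposition sits in the part you explicitly label ``the expected main obstacle'': that a semistable model dominating $\mathcal{Y}$ can be chosen with tree-shaped dual graph and with \emph{all} the sections through $Ram$ still meeting the strict transform $D_0$ rather than being swallowed by an exceptional component. That is precisely Raynaud's theorem 1' \cite{Ra} (made effective by Lehr--Matignon via the monodromy polynomial), it genuinely uses the equidistance hypothesis and a determination of where the blown-up discs are centred, and leaving it as a black box amounts to assuming the statement being proved.

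Moreover, one of your intermediate assertions is false, and false already in the main case of this paper: you claim the exceptional subtrees are attached to $D_0$ at the branch-point specializations $\bar{x}_i$, so that $D_0$ meets $t+2$ nodes, is stable, and hence $h(D_0)$ is an irreducible component of $\mathcal{C}_k$. For the curves $C_{c,\rho}$ of Theorem \ref{maintheo} the stable reduction is the irreducible curve $A_u$; the unique tail is attached to $D_0$ at the common specialization of the roots $y$ of $L_{c,\rho}$, namely $\bar{X}=0$, which is \emph{not} a branch point (all roots of $f_{c,\rho}$ are units with $\bar f_{c,\rho}(0)=1$), and $h$ contracts $D_0$ together with the connecting chain to the single point at infinity of $\mathcal{C}_k$. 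In general the attaching points are reductions of roots of the monodromy polynomial, not of the branch points, and $D_0$, being a genus-$0$ component meeting the rest of $\tilde{\mathcal{Y}}_k$ in too few points, need not survive the contraction; this is exactly why the proposition speaks of ``inducing the identity on $h(D_0)$'' rather than on a component. Your final deduction happens to survive, since triviality of the action on $D_0$ gives triviality on $h(D_0)$ whether the latter is a curve or a point, but the geometric picture underpinning your tree argument and your identification of $D_0$ as the component receiving $Ram$ must be corrected accordingly. (Minor: the map $D_0\to\mathcal{T}_k$ is the degree-$p$ $k$-linear relative Frobenius --- degree exactly $p$ here since some $n_i$ is prime to $p$ --- not the absolute Frobenius, and not of degree $1$.)
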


\begin{proof}see \cite{CM}.
\end{proof}

\noindent \textbf{Remark :} The component $D_0$ is the so called \textsl{original component}.

\begin{defi}
If \eqref{inj} is surjective, we say that $C$ has \textsl{maximal monodromy}. If $v_p(|\Gal(\M/K)|)=v_p(|\Aut_k(\mathcal{C}_k)^{\#}|)$, we say that $C$ has \textsl{maximal wild monodromy}.
\end{defi}

\begin{defi}
The valuation on $K(X)$ corresponding to the discrete valuation ring $R[X]_{(\pi_K)}$ is called the \textsl{Gauss valuation $v_X$} with respect to $X$. We then have
\begin{align*}
v_X \left( \sum_{i=0}^ma_iX^i \right)= \min \lbrace v(a_i), \; 0 \leq i \leq m \rbrace.
\end{align*}
Note that a change of variables $T=\frac{X-y}{\rho}$ for $y, \rho \in R$ induces a Gauss valuation $v_T$. These valuations are exactly those that come from the local rings at generic points of components in the semi-stables models of $\Proj_K$.
\end{defi}

\addtocounter{mysub}{1}
\arabic{mysub}. \textit{Extra-special $p$-groups.} The Galois groups and automorphism groups that we will have to consider are $p$-groups with peculiar group theoretic properties (see for example \cite{Hu} Kapitel III  \S 13 or \cite{Su} for an account on extra-special $p$-groups). We will denote by  ${\rm Z}(G)$ (resp. ${\rm D}(G)$, $\Phi(G)$) the center (resp. the derived subgroup,  the Frattini subgroup) of $G$. If $G$ is a $p$-group, one has  $\Phi(G)={\rm D}(G)G^p$.

\begin{defi}
An \textsl{extra-special $p$-group} is a non abelian $p$-group $G$ such that $ {\rm D}(G)={\rm Z}(G)=\Phi(G)$ has order $p$.
\end{defi}

\begin{prop}\label{lemxspegp}
Let $G$ be an extra-special $p$-group.
\begin{enumerate}
\item Then $|G|=p^{2n+1}$ for some $n \in \N^{\times}$.
\item One has the exact sequence 
\begin{align*}
0 \to {\rm Z}(G) \to G \to (\Z/p\Z)^{2n} \to 0.
\end{align*}
\item The group $G$ has an abelian subgroup $J$ such that $\Cen(G) \subseteq J$ and $|J/\Cen(G)|=p^n$.
\end{enumerate}
\end{prop}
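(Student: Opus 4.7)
The plan is to linearize the problem by passing to the quotient modulo $\Cen(G)$. Since $G$ is a $p$-group, $\Phi(G) = {\rm D}(G) G^p$, and by hypothesis $\Phi(G) = \Cen(G)$ has order $p$; consequently the quotient $V := G/\Cen(G)$ is elementary abelian, so $V \simeq \F_p^m$ for some integer $m \geq 1$. This already exhibits the exact sequence claimed in (2), provided one can show $m$ is even.

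To pin down $m$, I would introduce the commutator pairing
\[ \omega : V \times V \to \Cen(G) \simeq \F_p, \qquad (\bar x, \bar y) \mapsto [x,y]. \]
It is well-defined since $\Cen(G)$ is central, and it is $\F_p$-bilinear via the standard identities $[xy,z] \equiv [x,z][y,z]$ and $[x,yz] \equiv [x,y][x,z]$, both valid modulo ${\rm D}(G) = \Cen(G)$. It is alternating (since $[x,x]=1$) and non-degenerate: if $\omega(\bar x, -) = 0$, then $x$ centralises every element of $G$, forcing $\bar x = 0$. A non-degenerate alternating form on $\F_p^m$ forces $m = 2n$, which proves (1) and completes (2).

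For (3) I would invoke the standard symplectic classification: any non-degenerate symplectic $\F_p$-space of dimension $2n$ admits a Lagrangian subspace $L$ of dimension $n$, for instance by splitting off hyperbolic planes inductively. Taking $J$ to be the preimage of $L$ under $G \to V$, one has $\Cen(G) \subseteq J$ and $|J/\Cen(G)| = p^n$ by construction; moreover $J$ is abelian, because for $x,y \in J$ the commutator $[x,y] \in \Cen(G)$ has image $\omega(\bar x, \bar y) = 0$ in $\F_p$, hence $[x,y] = 1$.

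The only step requiring real care is the bilinearity of $\omega$, which relies crucially on the inclusion ${\rm D}(G) \subseteq \Cen(G)$ so that nested commutators vanish modulo the centre; once this is in place, the remainder is a routine translation between group-theoretic statements and standard symplectic linear algebra over $\F_p$.
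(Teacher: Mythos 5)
Your proof is correct and is precisely the standard argument (the commutator pairing makes $G/\Cen(G)$ a non-degenerate symplectic $\F_p$-space, and a Lagrangian lifts to an abelian subgroup); the paper gives no proof of its own, deferring to Huppert and Suzuki, where this is exactly the argument given. The one phrasing to tighten: the identities $[xy,z]=[x,z][y,z]$ hold \emph{exactly} here (not merely modulo ${\rm D}(G)$), because ${\rm D}(G)\subseteq\Cen(G)$ kills the conjugation twist $[x,z]^y=[x,z]$ — as you yourself note in your closing paragraph.
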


%

\addtocounter{mysub}{1}
\arabic{mysub}. \textit{Galois extensions of complete DVRs.} Let $\L/K$ be a finite Galois extension with group $G$. Then $G$ is endowed with a \textsl{lower ramification filtration} $(G_i)_{i \geq -1}$ where $G_i$ is the \textsl{$i$-th lower ramification group} defined by $G_i := \lbrace \sigma \in G \; | \; v_{\L}(\sigma(\pi_{\L})-\pi_{\L}) \geq i+1 \rbrace$. The integers $i$ such that $G_i \neq G_{i+1}$ are called \textsl{lower breaks}. For $\sigma \in G- \lbrace 1 \rbrace$, let $i_G(\sigma):= v_{\L}(\sigma(\pi_{\L})-\pi_{\L})$. The group $G$ is also endowed with a \textsl{higher ramification filtration} $(G^i)_{i \geq -1}$ which can be computed from the $G_i$'s by means of the \textsl{Herbrand's function} $\varphi_{\L/K}$. The real numbers $t$ such that $ \forall \epsilon >0, \; G^{t+\epsilon} \neq G^t$  are called \textsl{higher breaks}.\\

\begin{lem}\label{maurizio}
Let $M/K$ be a Galois extension such that $\Gal(M/K)$ is an extra-special $p$-group of order $p^{2n+1}$. Assume that $\Gal(M^{\Cen(G)}/K)_2= \lbrace 1 \rbrace$, then the break $t$ of $M/M^{\Cen(G)}$  is such that $t \in 1+p^n\N$.
\end{lem}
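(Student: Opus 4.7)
The plan is to pin down the lower ramification filtration of $G := \Gal(M/K)$ using the hypothesis, and then to apply the Hasse-Arf theorem to the abelian subextension supplied by Proposition \ref{lemxspegp}(3).

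First I would show that the filtration of $G$ has the shape
\[ G = G_0 = G_1 \supsetneq G_2 = \dots = G_t = \Cen(G) \supsetneq \lbrace 1 \rbrace , \]
where $t$ is the break of $M/M^{\Cen(G)}$ (with the convention that the block $G_2 = \dots = G_t = \Cen(G)$ is empty when $t = 1$, in which case the conclusion is trivial). The equality $G_0 = G_1 = G$ follows from $M/K$ being totally ramified ($k$ is algebraically closed) together with $G$ being a $p$-group, which kills the tame quotient $G_0/G_1$. The same observation applied to the $p$-group $G/\Cen(G)$, combined with the hypothesis $\Gal(M^{\Cen(G)}/K)_2 = \lbrace 1 \rbrace$, shows that the filtration of $G/\Cen(G)$ has a unique break at $1$ in both lower and upper numbering. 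By Herbrand's compatibility of upper numbering with quotients,
\[ (G/\Cen(G))^u = G^u \Cen(G) / \Cen(G) = \lbrace 1 \rbrace \text{ for } u > 1, \]
so $G^u \subseteq \Cen(G)$ for $u > 1$. Since $|\Cen(G)| = p$, on $(1, \infty)$ the group $G^u$ is either $\Cen(G)$ or trivial; translating back via $\varphi_{M/K}$ yields the announced shape, and compatibility of the lower numbering with subgroups $\Cen(G)_i = G_i \cap \Cen(G)$ identifies $t$ with the break of $M/M^{\Cen(G)}$.

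The core of the argument is to bring in the abelian subgroup $J \subseteq G$ from Proposition \ref{lemxspegp}(3), satisfying $\Cen(G) \subseteq J$ and $[J : \Cen(G)] = p^n$. From the filtration of $G$ above, the lower filtration of $J$ in $M/M^J$ reads $J_0 = J_1 = J$, $J_i = \Cen(G)$ for $2 \leq i \leq t$, and $J_i = \lbrace 1 \rbrace$ for $i > t$. Computing $\varphi_{M/M^J}$ from the slopes $|J_i|/|J_0|$ places the non-trivial upper breaks of $J$ at $1$ and at
\[ \varphi_{M/M^J}(t) = 1 + \frac{t-1}{[J : \Cen(G)]} = 1 + \frac{t-1}{p^n} . \]
Since $J$ is abelian, the Hasse-Arf theorem forces this second break to be an integer, giving $t - 1 \in p^n \N$.

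The only essential ingredient beyond routine bookkeeping with $\varphi$ is the existence of $J$ of index $p^n$ over $\Cen(G)$; applying Hasse-Arf directly to the cyclic extension $M/M^{\Cen(G)}$ would be vacuous, and it is precisely this intermediate abelian group that converts the computation into a congruence modulo $p^n$.
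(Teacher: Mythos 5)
Your proof is correct and takes essentially the same route as the paper: both arguments rest on the abelian subgroup $J \supseteq \Cen(G)$ of index $p^n$ over the center supplied by Proposition \ref{lemxspegp}(3), and on applying the Hasse-Arf theorem to the abelian extension $M/M^J$, whose relevant upper break is $1+\frac{t-1}{p^n}$. The only difference is bookkeeping: the paper obtains this value by transitivity of Herbrand functions through the tower $M/M^{\Cen(G)}/M^{J}$ (using $\varphi_{M/M^{\Cen(G)}}(t)=t$ and $\Gal(M^{\Cen(G)}/M^J)_2=\lbrace 1\rbrace$), whereas you first pin down the full lower filtration of $G$ via Herbrand's quotient compatibility and then restrict to $J$ — the same computation either way.
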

\begin{proof}
According to  Proposition \ref{lemxspegp} \textit{3.}, there exists an abelian subgroup $J$ with $\Cen(G) \subseteq J \subseteq G$ and $|J/\Cen(G)|=p^n$. Thus, one has the following diagram 

\begin{center}
\begin{tikzpicture}[scale=2]
\node (K) at (-1,-1) {$K$};
\node(d1) at (-1.6,0.5) {$[M:L]=p$};
\node(d2) at (-1.6,-0.5) {$[L:K]=p^{2n}$};
\node(d3) at (0.1,-0.95) {$[F:K]=p^n$};
\node (L) at (-1,0) {$L:=M^{\Cen(G)}$};
\node (M) at (-1,1) {$M$};
\node (F) at (0.5,-0.5) {$F:=M^J$};
\draw[-] (-1,-0.8)--(-1,-0.2);
\draw[-] (-1,0.2)--(-1,0.8);
\draw[-] (-0.9,-1)--(0,-0.6);
\draw[-] (0,-0.45)--(-0.75,-0.05);
\draw[-] (0,-0.3)--(-0.9,0.9);
\end{tikzpicture}
\end{center}
Let $t$ be the lower break of $M/L$, then $t$ is a lower break of $M/F$ and $\varphi_{M/F}(t)=\varphi_{L/F}(\varphi_{M/L}(t))$ is a higher break of $M/F$. Since $\varphi_{M/L}(t)=t$, one has $\varphi_{M/F}(t)=\varphi_{L/F}(t)$. Since $\Gal(L/K)_2= \lbrace 1 \rbrace$, one has $\Gal(L/F)_2= \lbrace 1 \rbrace$ and $\varphi_{L/F}(t)=1+\frac{t-1}{p^n}$. The Hasse-Arf Theorem applied to the abelian extension $M/F$ implies that $1+\frac{t-1}{p^n}\in \N - \lbrace 0 \rbrace$, thus $t \in 1 + p^n\N$.

\end{proof}

\addtocounter{mysub}{1}
\arabic{mysub}. \textit{Torsion points on abelian varieties.} Let $A/K$ be an abelian variety over $K$ with  potential good reduction and $ \ell \neq p$ be a prime number. We denote by $A[\ell]$ the $\ell$-torsion group of $A(K^{\rm  alg})$ and by $T_{\ell}(A)= \varprojlim A[\ell^n]$ (resp. $V_{\ell}(A)=T_{\ell}(A) \otimes\Q_{\ell}$) the Tate module (resp. $\ell$-adic Tate module) of $A$.

\indent The following result may be found in \cite{Gur} (paragraph 3). We recall it for the convenience of the reader.

\begin{lem}\label{guralnick}
Let $k=k^{\rm alg}$  be a field with ${\rm char} \; k=p \geq 0$ and $C/k$ be a projective, smooth, integral curve. Let $\ell \neq p$ be a prime number and $H$ be a finite subgroup of $\Aut_k(C)$ such that $(|H|,\ell)=1$. Then 
\[2g(C/H)= \dim_{\F_{\ell}} \Jac(C)[\ell]^H.\]
\end{lem}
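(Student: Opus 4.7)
The plan is to identify $\Jac(C)[\ell]^H$ with the $\ell$-torsion of the Jacobian of the quotient curve $C/H$ via the pullback map, and then use that for $\ell$ coprime to the characteristic, the $\ell$-torsion of an abelian variety of dimension $g$ is $\F_\ell$-free of rank $2g$. Because $k$ is algebraically closed and $H$ is a finite group of $k$-automorphisms of a smooth projective integral curve, the categorical quotient $C' := C/H$ exists as a smooth, projective, integral curve of genus $g' = g(C/H)$, and the projection $\pi : C \to C'$ is a finite morphism of degree $|H|$.

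Consider the two maps between Jacobians attached to $\pi$: the pullback $\pi^* : \Jac(C') \to \Jac(C)$ and the norm/pushforward $\pi_* : \Jac(C) \to \Jac(C')$. They satisfy $\pi_* \circ \pi^* = [|H|]$ on $\Jac(C')$ and $\pi^* \circ \pi_* = \sum_{\sigma \in H}\sigma^*$ on $\Jac(C)$, and the image of $\pi^*$ lies in $\Jac(C)^H$ because $\sigma \circ \pi = \pi$ for every $\sigma \in H$. Restricting $\pi^*$ to $\ell$-torsion, I claim this yields an isomorphism $\pi^* : \Jac(C')[\ell] \xrightarrow{\sim} \Jac(C)[\ell]^H$. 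Indeed, since $(|H|,\ell) = 1$, multiplication by $|H|$ is invertible on both $\ell$-torsion groups: if $\pi^*(y) = 0$ with $y \in \Jac(C')[\ell]$ then $|H|\, y = \pi_*\pi^*(y) = 0$ forces $y = 0$, while given $x \in \Jac(C)[\ell]^H$ the element $y := |H|^{-1}\pi_*(x) \in \Jac(C')[\ell]$ satisfies
\[ \pi^*(y) = |H|^{-1}\,\pi^*\pi_*(x) = |H|^{-1}\sum_{\sigma \in H}\sigma^*(x) = |H|^{-1}\cdot |H|\cdot x = x. \]

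It remains only to observe that $\ell \neq p = \mathrm{char}\,k$ implies $\dim_{\F_\ell}\Jac(C')[\ell] = 2g'$, which combined with the isomorphism above yields $\dim_{\F_\ell}\Jac(C)[\ell]^H = 2g(C/H)$, as required. The only subtlety worth highlighting is that $\pi^*$ is in general \emph{not} injective on the whole of $\Jac(C')$ (its kernel is a finite group whose exponent divides a power of $|H|$, arising from ramification and $H$-cohomology); however this obstruction is killed after restriction to $\ell$-torsion, which is precisely where the hypothesis $(|H|,\ell) = 1$ intervenes. No other serious obstacle is anticipated.
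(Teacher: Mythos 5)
Your argument is correct and complete. Note that the paper itself supplies no proof of this lemma: it only states it and refers to \cite{Gur} (paragraph 3), so there is no in-text argument to compare against. Your proof is the standard one and fills that gap in a self-contained way: the identities $\pi_*\circ\pi^*=[|H|]$ on $\Jac(C/H)$ and $\pi^*\circ\pi_*=\sum_{\sigma\in H}\sigma^*$ on $\Jac(C)$ do hold for the Galois cover $\pi\colon C\to C/H$ (the ramification indices exactly compensate the orbit sizes, so $\pi^*\pi_*P=\sum_{\sigma}\sigma(P)$ even at branch points), and with $(|H|,\ell)=1$ they give precisely the isomorphism $\Jac(C/H)[\ell]\simeq\Jac(C)[\ell]^H$ you claim; the final count $\dim_{\F_\ell}\Jac(C/H)[\ell]=2g(C/H)$ uses only $\ell\neq p$. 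The edge case $g(C/H)=0$ is also handled correctly by your surjectivity argument. An alternative route (closer in spirit to how such statements are often derived in the literature, e.g.\ via the character of $H$ acting on $T_\ell\Jac(C)$ and a Lefschetz/Chevalley--Weil count) computes $\dim V_\ell(\Jac C)^H$ as an inner product of characters; your correspondence-based argument avoids any character theory and is, if anything, more elementary. One small imprecision that does not affect the proof: the kernel of $\pi^*$ on all of $\Jac(C/H)$ is killed by $|H|$ itself (since $\pi_*\pi^*=[|H|]$), not merely by a power of $|H|$, but you only use this on $\ell$-torsion where the hypothesis $(|H|,\ell)=1$ makes it vanish anyway.
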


\indent If $\ell \geq 3$, then  $\L=K(A[\ell])$ is the minimal extension over which $A/K$ has good reduction. It is a Galois extension with group $G$ (see \cite{SerTat}). We denote by $r_G$ (resp. $1_G$) the character of the regular (resp. unit) representation of $G$. We denote by $I$ the inertia group of $K^{\rm alg}/K$. For further explanations about  conductor exponents see \cite{Ser2}, \cite{Ogg} and \cite{SerTat}. 
\begin{defi}
\begin{enumerate}
\item Let 
\begin{align*}
a_G(\sigma) & := -i_G(\sigma), \; \; \; \;\sigma \neq 1,\\
a_G(1)      & := \sum_{\sigma \neq 1} i_G(\sigma),
\end{align*}
 and $\sw_G:=a_G-r_G+1_G$. Then, $a_G$ is the character of a $\Q_{\ell}[G]$-module and there exists a projective $\Z_{\ell}[G]$-module $\Sw_G$ such that $\Sw_G \otimes_{\Z_{\ell}} \Q_{\ell}$ has character $\sw_G$.
\item We still denote by $T_{\ell}(A)$ (resp. $A[\ell]$) the $\Z_{\ell}[G]$-module (resp. $\F_{\ell}[G]$-module) afforded by $G \to \Aut(T_{\ell}(A))$ (resp. $ G \to \Aut(A[\ell])$). Let 
\begin{align*}
\sw(A/K)&:=\dim_{\F_{\ell}} \Hom_G(\Sw_G,A[\ell]),\\
\epsilon(A/K)&:=\codim_{\Q_{\ell}} V_{\ell}(A)^I.
\end{align*}
The integer $f(A/K):=\epsilon(A/K)+\sw(A/K)$ is the so called \textsl{conductor exponent of $A/K$} and $\sw(A/K)$ is the \textsl{Swan conductor of $A/K$}.
\end{enumerate}
\end{defi}

\begin{prop}Let $\ell \neq p$, $ \ell \geq 3$ be a prime number.
\begin{enumerate} 
\item The integers $\sw(A/K)$ and $\epsilon(A/K)$ are independent of $\ell$.
\item One has 
\[ \sw(A/K) = \sum_{i \geq 1} \frac{|G_i|}{|G_0|} \dim_{\F_{\ell}} A[\ell]/A[\ell]^{G_i}.\]
Moreover, for $\ell$ large enough, $\epsilon(A/K)=\dim_{\F_{\ell}}A[\ell]/A[\ell]^{G_0}$.
\end{enumerate}
\end{prop}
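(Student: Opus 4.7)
The plan is to treat both parts as routine consequences of the Artin--Swan formalism (Serre, \emph{Local Fields}, Ch.~VI) combined with the compatibility of the $\ell$-adic representations attached to an abelian variety.

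For part 1, I would note that $G=\Gal(\L/K)$ acts on $V_{\ell}(A)$ for every $\ell\neq p$, and that by Grothendieck--Serre--Tate the character of $V_{\ell}(A)$ on $G$ is independent of $\ell$. Unwinding the definitions one has
\[
\sw(A/K) = \langle \sw_G, \chi_{V_{\ell}(A)} \rangle, \qquad \epsilon(A/K) = \dim_{\Q_\ell} V_{\ell}(A) - \dim_{\Q_\ell} V_{\ell}(A)^{G_0},
\]
and both expressions depend only on this common character, whence their independence from $\ell$.

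For part 2, the approach is to apply Serre's formula for the Artin conductor of a finite $G$-representation $V$,
\[
f(\chi_V) = \sum_{i\geq 0} \frac{|G_i|}{|G_0|}\bigl(\dim V - \dim V^{G_i}\bigr),
\]
subtract the tame term $\dim V - \dim V^{G_0}$ to isolate $\sw(V)$, and then specialize to $V=A[\ell]$ viewed as an $\F_\ell[G]$-module. For $\ell$ coprime to $|G_0|$ the group algebra $\F_\ell[G_0]$ is semisimple, so $A[\ell]^H$ is the reduction mod $\ell$ of the direct summand $T_\ell(A)^H$ for every subgroup $H\subseteq G$, giving
\[
\dim_{\F_\ell} A[\ell]^H = \dim_{\Q_\ell} V_\ell(A)^H.
\]
Plugging this into Serre's formula yields the displayed sum for $\sw(A/K)$, and specialization to $H=G_0=I$ gives the claimed expression for $\epsilon(A/K)$ in the large-$\ell$ regime.

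The only mildly delicate point is the reduction-mod-$\ell$ step identifying $A[\ell]^{G_i}$ with $T_\ell(A)^{G_i}\bmod \ell$; once $\ell$ is chosen prime to $|G_0|$ this is immediate from semisimplicity, and the rest of the argument is a direct unwinding of the definitions stated just above the proposition.
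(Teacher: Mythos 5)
The paper gives no proof of this proposition --- it is quoted as standard background with references to Serre--Tate, Ogg and Serre's \textsl{Facteurs locaux} --- so there is nothing internal to compare against; your sketch is the standard argument, and part 1 is fine as written. There is, however, one genuine mismatch in part 2: the displayed formula for $\sw(A/K)$ is asserted for \emph{every} prime $\ell \neq p$, $\ell \geq 3$, whereas your derivation (replace $A[\ell]$ by $V_{\ell}(A)$ using semisimplicity of $\F_{\ell}[G_0]$, then apply the characteristic-zero conductor formula) only delivers it when $\ell \nmid |G_0|$. The right observation is that only the groups $G_i$ with $i \geq 1$ enter the Swan sum, and these lie in the wild inertia, hence are $p$-groups; so $\F_{\ell}[G_i]$ is semisimple for every $\ell \neq p$ and $\dim_{\F_{\ell}} A[\ell]^{G_i} = \dim_{\Q_{\ell}} V_{\ell}(A)^{G_i}$ with no largeness assumption on $\ell$. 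The hypothesis ``$\ell$ large enough'' is genuinely needed only for the $i=0$ term, i.e.\ for the identity $\epsilon(A/K)=\dim_{\F_{\ell}}A[\ell]/A[\ell]^{G_0}$, which is exactly how the proposition is phrased; as written, your argument proves a weaker statement than the one claimed.

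A second, smaller point: you pass from the definition $\sw(A/K)=\dim_{\F_{\ell}}\Hom_G(\Sw_G,A[\ell])$ to the pairing $\langle \sw_G,\chi_{V_{\ell}(A)}\rangle$ without comment. This step uses that $\Sw_G$ is a projective $\Z_{\ell}[G]$-module, so that $\dim_{\F_{\ell}}\Hom_G(\Sw_G,M)$ is computed by pairing $\sw_G$ against the Brauer character of $M$, together with the fact that $\sw_G$ is supported on the $p$-group $G_1$, whose elements are $\ell$-regular and on which the Brauer character of $A[\ell]=T_{\ell}(A)/\ell T_{\ell}(A)$ coincides with the ordinary character of $V_{\ell}(A)$. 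Once this is made explicit, both the $\ell$-independence in part 1 and the unrestricted Swan formula in part 2 fall out of the same computation, so it would be cleaner to make this the backbone of the proof rather than the semisimplicity-of-$\F_{\ell}[G_0]$ detour.
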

\noindent \textbf{Remark :} It follows from the definition that $\sw(A/K)=0$ if and only if $G_1= \lbrace 1 \rbrace$. The Swan conductor is a measure of the wild ramification.\\

\addtocounter{mysub}{1}
\arabic{mysub}. \textit{Automorphisms of Artin-Schreier covers.} See \cite{LM} for further results on this topic. Let $R(t) \in k[t]$ be a monic additive polynomial and $A_R/k$ be the  smooth, projective, geometrically irreducible curve birationally given by $w^p-w=tR(t)$. There is a so called Artin-Schreier morphism $\pi : A_R \to \Proj_k$. The automorphism $t_a$ of $\Proj_k$  given by $t \mapsto t+a$ with $a \in k$ has a prolongation $\tilde{t}_a$ to $A_R$ if there is a commutative diagram 

\begin{center}
\begin{tikzpicture}[scale=1]
\node (CM1) at (-1,1) {$A_R$};
\draw[->] (-0.5,1)--(0.5,1);
\node (CM2) at (1,1) {$A_R$};
\node (sigma1) at (-0.05,1.25) {$\tilde{t}_a$};
\draw[->] (-1,0.7)--(-1,-0.5);
\node (pi1) at (-1.2,0) {$\pi$};
\node (pi2) at (0.8,0) {$\pi$};
\draw[->] (1,0.7)--(1,-0.5);
\node (R1) at (-1,-1) {$ \Proj_k$};
\draw[->] (-0.5,-1)--(0.5,-1);
\node (R2) at (1,-1) {$\Proj_k$};
\node (sigma2) at (-0.05,-0.8) {$t_a$};
\end{tikzpicture}
\end{center}

\begin{prop}\label{lem:prolon}
Let $n \geq 1$, $q:=p^n$ and $R(t):=\sum_{k=0}^{n-1}\bar{u}_kt^{p^k} + t^q\in k[t]$. The automorphism of $\Proj_k$  given by $t \mapsto t+a$ with $a \in k$ has a prolongation to $A_R/k$ if and only if one has 
\[a^{q^2}+(2\bar{u}_0a)^q+\sum_{k=1}^{n-1}(\bar{u}_k^qa^{qp^k}+(\bar{u}_ka)^{q/p^k})+a=0.\]
\end{prop}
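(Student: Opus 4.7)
The plan is to reduce the existence of a prolongation to a polynomial identity coming from the image of the Artin-Schreier map $\wp(x) := x^p - x$ on $k[t]$. First I would note that $k(A_R) = k(t,w)$ is a degree-$p$ Artin-Schreier extension of $k(t)$, so $t \mapsto t+a$ prolongs to $A_R$ if and only if there is $w' \in k(t,w)$ with $(w')^p - w' = (t+a)R(t+a)$. Any two Artin-Schreier generators of a common degree-$p$ cyclic extension are related by $w' = cw + h$ with $c \in \F_p^{\times}$ and $h \in k(t)$, so this amounts to
\[
(t+a)R(t+a) - c \cdot tR(t) = \wp(h).
\]
The left-hand side is a polynomial, hence a routine pole analysis forces $h \in k[t]$. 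The coefficient of $t^{q+1}$ on the left is $1-c$, while $\deg \wp(h) = p \deg h$ when $h$ is non-constant; as $q+1 = p^n+1$ is coprime to $p$, $c=1$ is forced. Consequently, a prolongation exists if and only if
\[
\Delta_a(t) := (t+a)R(t+a) - tR(t) \in \wp(k[t]).
\]

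Next, using the $\F_p$-linearity of $R$ to write $R(t+a) = R(t)+R(a)$, I would expand $\Delta_a = tR(a) + aR(t) + aR(a)$. Writing $\Delta_a = \sum_i f_i t^i$, the non-zero coefficients are
\[
f_0 = aR(a), \quad f_1 = 2\bar{u}_0 a + \sum_{k=1}^{n-1}\bar{u}_k a^{p^k} + a^q, \quad f_{p^k} = \bar{u}_k a \ (1 \leq k \leq n-1), \quad f_{p^n} = a.
\]
Crucially, every positive-index non-zero coefficient is supported at a pure power of $p$.

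Finally, I would characterise $\wp(k[t])$. Solving $\wp(h) = f$ with $h = \sum_j c_j t^j$ coefficient-by-coefficient yields $c_i = -f_i$ when $p \nmid i$ and $c_i = c_{i/p}^p - f_i$ when $p \mid i$, $i > 0$; the constant term solves an equation solvable in $k$ by algebraic closedness. Iterating, for $m$ coprime to $p$,
\[
c_{p^v m} = -\sum_{j=0}^{v} f_{p^j m}^{p^{v-j}},
\]
and $h$ is a polynomial if and only if these vanish for $v$ large. In our situation the only $m$ coprime to $p$ producing a non-trivial contribution is $m=1$, and $v = n$ suffices, giving the single identity $f_1^q + \sum_{k=1}^{n-1}(\bar{u}_k a)^{q/p^k} + a = 0$. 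Expanding $f_1^q = (2\bar{u}_0 a)^q + \sum_{k=1}^{n-1}\bar{u}_k^q a^{qp^k} + a^{q^2}$ via Frobenius additivity in characteristic $p$ produces exactly the stated equation. The main obstacle is this last paragraph: extracting the polynomial obstruction cleanly and confirming that the only non-trivial vanishing condition arises from $m=1$; once that is settled, the rest is symbolic manipulation.
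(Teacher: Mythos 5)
Your argument is correct, and it is essentially the standard one: the paper itself states Proposition \ref{lem:prolon} without proof, deferring to \cite{LM}, so there is no in-paper proof to compare against; what you have written is a self-contained version of the Lehr--Matignon computation. The three pillars all hold up: (i) the reduction of the lifting problem to $(t+a)R(t+a)-c\,tR(t)\in\wp(k(t))$ follows from the fact that any $w'\in k(t,w)$ with $\wp(w')\in k(t)$ lies in $\F_p w+k(t)$ (a one-line Galois argument you could state explicitly); (ii) the pole/degree analysis forcing $h\in k[t]$ and $c=1$ is sound because $\deg\wp(h)=p\deg h$ while $p\nmid 1+q$; and (iii) the additivity of $R$ makes every positive-degree coefficient of $\Delta_a$ sit at a pure power of $p$, so your recursion $c_{p^vm}=-\sum_{j\le v}f_{p^jm}^{p^{v-j}}$ yields exactly one obstruction, at $m=1$, $v=n$, namely $f_1^q+\sum_{k=1}^{n-1}(\bar u_ka)^{q/p^k}+a=0$, which Frobenius expands to the stated equation. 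The only places deserving an extra sentence in a final write-up are the justification of $w'=cw+h$ and the observation that the constant term imposes no condition since $k$ is algebraically closed; both are routine.
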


\section{Main theorem}
\label{sec:2}
We start by fixing notations that will be used throughout this section.\\

\noindent \textbf{Notations.} We denote by $\mathfrak{m}$ the maximal ideal of $ (K^{\rm alg})^{\circ}$. Let $n \in \N^{\times}$, ${q:=p^n}$, $a_n:=(-1)^q(-p)^{p+p^2+\dots+q}$ and $\forall \; 0 \leq i \leq n-1$, ${d_i:=p^{n-i+1}+\dots+q}$. We denote by $\Qpur$ the maximal unramified extension of $\Q_p$ and we put ${K:=\Qpur(\lambda^{1/(1+q)})}$. Let $\underline{\rho}:=(\rho_0, \dots, \rho_{n-1})$ where $ \forall \; 0 \leq k \leq n-1$, $\rho_k \in K$, $\rho_k=u_k\lambda^{p(q-p^k)/(1+q)}$ and $v(u_k)=0$ or $u_k=0$. For $c \in R$, let 
\begin{align*}
&f_{c,\rho}(X):=1+\sum_{k=0}^{n-1}\rho_kX^{1+p^k}+cX^q+X^{1+q},\\
{\rm and} \; & s_{1,\rho}(X):=2\rho_0X+\sum_{k=1}^{n-1}\rho_kX^{p^k}+X^q.
\end{align*}
One defines the \textsl{modified monodromy polynomial} $L_{c,\rho}(X)$ by 
\[s_{1,\rho}(X)^q-a_nf_{c,\rho}(X)^{q-1}(c+X)-(-1)^q\sum_{k=1}^{n-1}(\rho_kX)^{q/p^k}(-p)^{d_k}f_{c,\rho}(X)^{q(p^k-1)/p^k}.\]
Let $C_{c,\rho} / K $ and $A_u/k$ be the smooth projective integral curves birationally given respectively by $Y^p=f_{c,\rho}(X)$ and $w^p-w=\sum_{k=0}^{n-1}\bar{u}_kt^{1+p^k}+t^{1+q}$.

\begin{thm}\label{maintheo}
The curve $C_{c,\rho}/K$ has potential good reduction isomorphic to $A_u/k$.
\begin{enumerate}
\item If $v(c) \geq v(\lambda^{p/(1+q)})$, then the monodromy extension of $C_{c,\rho}/K$ is trivial.
\item If $v(c) < v(\lambda^{p/(1+q)})$, let $y$ be a root of $L_{c,\rho}(X)$ in $K^{\rm  alg}$. Then $C_{c,\rho}$ has good reduction over $K(y,f_{c,\rho}(y)^{1/p})$. If $L_{c,\rho}(X)$ is irreducible over $K$, then $C_{c,\rho}/K$ has maximal wild monodromy. The monodromy extension of $C_{c,\rho}/K$ is $\M=K(y,f_{c,\rho}(y)^{1/p})$ and $G=\Gal(\M/K)$ is an extra-special $p$-group of order $pq^2$. 
\item Assume that $c=1$. The polynomial $L_{1,\rho}(X)$ is irreducible over $K$. The lower ramification filtration of $G$ is
\[G= G_0 = G_1 \supsetneq G_2 = \dots = G_{u} = {\rm Z}(G) \supsetneq \lbrace 1 \rbrace , \]
 with $u \in 1+q\N$. Moreover, if $p=2$, then $u=1+q$, one has $f(\Jac(C_{1,\rho})/K)=2q+1$ and $\sw(\Jac(C_{1,\rho})/\Q_2^{\rm ur})=1$.
\end{enumerate}
\end{thm}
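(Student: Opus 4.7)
My plan is to reduce the entire problem to an explicit Taylor expansion of $f_{c,\rho}$ around a suitable translation point $y$, combined with the Kummer-Artin-Schreier identity for lifting $p$-cyclic covers. Setting $\pi:=\lambda^{1/(1+q)}$, I substitute $X = y + \pi^p T$ and $Y = Y_0(1+\lambda W)$ with $Y_0^p = f_{c,\rho}(y)$ into $Y^p = f_{c,\rho}(X)$; using the congruence $(1+\lambda W)^p \equiv 1 + \lambda^p(W^p-W) \pmod{\lambda^{p+1}}$ together with the key relations $\rho_k(\pi^p)^{1+p^k} = u_k\lambda^p$ and $(\pi^p)^{1+q}=\lambda^p$, the reduction modulo $\mathfrak{m}$ recovers $A_u/k$ and establishes potential good reduction. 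For Part~1 one takes $y=0$: the only intruding term is $c\pi^{pq}T^q$, which lies in $\lambda^{p+1}R$ precisely when $v(c)\ge p/(1+q) = v(\lambda^{p/(1+q)})$, so the reduction is defined over $K$ itself and the monodromy extension is trivial.

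For Part~2, when $v(c) < v(\lambda^{p/(1+q)})$ the choice $y=0$ fails and a nonzero translation is required. The plan is to match the Taylor coefficients of $f_{c,\rho}(y+\pi^p T)/f_{c,\rho}(y) - 1$ against $\lambda^p$ times the target Artin-Schreier polynomial modulo $\lambda^{p+1}$: the coefficients of $T^j$ for $j\notin\{1+p^k\}_k\cup\{1+q\}$ must vanish, and those at $j=1+p^k$ and $j=1+q$ must realize the prescribed values. Writing the exact identity $f'_{c,\rho}(y) = s_{1,\rho}(y) + p\cdot A(y)$ (arising from $(1+p^k)\rho_k - \rho_k = p^k\rho_k$ and analogous comparisons) and substituting $p \equiv -\lambda^{p-1}\cdot(\mathrm{unit})\pmod{\lambda^p}$, the vanishing of the $T^1$-coefficient yields, after raising to the $q$-th power, $s_{1,\rho}(y)^q \equiv a_n f_{c,\rho}(y)^{q-1}(c+y) \pmod{\mathrm{higher\ order}}$; iteratively clearing the $T^{p^k}$-coefficient conditions for $1\le k\le n-1$ generates the correction terms $(-p)^{d_k}(\rho_k y)^{q/p^k}f_{c,\rho}(y)^{q(p^k-1)/p^k}$, producing exactly $L_{c,\rho}(y)=0$. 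Under irreducibility of $L_{c,\rho}$, we obtain $[K(y):K]=q^2$ and $[M:K(y)]=p$, so $|G|=pq^2$; the center $\Cen(G)$ is generated by the Kummer action $f_{c,\rho}(y)^{1/p}\mapsto \zeta_p f_{c,\rho}(y)^{1/p}$, while $G/\Cen(G)\simeq(\Z/p\Z)^{2n}$ is identified with the group of prolongable translations of Proposition~\ref{lem:prolon}, making $G$ extra-special. Maximal wild monodromy then follows from \eqref{inj} together with the equality $|G_{\infty,1}(A_u)|=pq^2$ from \cite{LM}.

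For Part~3, the Newton polygon of $L_{1,\rho}(X)$ has a single slope; combined with the group-theoretic structure built in Part~2 (the $G/\Cen(G)$-action is transitive on the roots), this forces irreducibility over $K$ and total wild ramification, so $G=G_0=G_1$. Applying Lemma~\ref{maurizio} to $M/M^{\Cen(G)}$ forces the unique break $u$ between $\Cen(G)$ and $\{1\}$ to lie in $1+q\N$. For $p=2$, a direct computation of $v_M(\sigma(\pi_M)-\pi_M)$ for $\sigma$ generating $\Cen(G)$ pins down $u=1+q$; combined with Lemma~\ref{guralnick} (using $g(A_u)=q/2$ and $g(A_u/G)=g(A_u/\Cen(G))=0$), this yields $\epsilon(\Jac(C_{1,\rho})/K)=q$ and
\[
\sw(\Jac(C_{1,\rho})/K) = 1\cdot q + q\cdot\frac{|\Cen(G)|}{|G|}\cdot q = q+1,
\]
giving $f(\Jac(C_{1,\rho})/K) = 2q+1$. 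The value $\sw(\Jac(C_{1,\rho})/\Q_2^{\rm ur}) = 1$ then follows from the behaviour of the Swan conductor under the tame extension $K/\Q_2^{\rm ur}$ of degree $1+q$. The main obstacle is the precise derivation of $L_{c,\rho}$ in Part~2, which requires careful bookkeeping of congruences modulo ascending powers of $\lambda$ through an iterative sequence of substitutions, and the irreducibility/ramification analysis in Part~3, whose delicate coordination among the $p$-adic valuations of $a_n$, the $(-p)^{d_k}$, and the $\rho_k$ is what forces the restriction to $p=2$ for the final explicit conductor computation.
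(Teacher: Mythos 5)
Your overall strategy is the paper's: translate $X$ by a root $y$ of the monodromy polynomial, use the Kummer-to-Artin--Schreier degeneration $(1+\lambda W)^p\equiv 1+\lambda^p(W^p-W)$, identify $G/\Cen(G)$ with the prolongable translations of Proposition \ref{lem:prolon}, and analyse the filtration through $\L=K(y)$. But there is a genuine gap at the irreducibility of $L_{1,\rho}$ in Part~3. The single Newton-polygon slope gives $v(y)=v(a_n)/q^2$, hence $v_K(y)=p(q^2-1)/q^2$, whose denominator in lowest terms is only $q^2/p$ (the numerator is divisible by $p$ while $p\nmid q^2-1$); so the polygon forces only $q^2/p\mid[K(y):K]$ and is compatible with a factorization into $p$ irreducible factors of degree $q^2/p$. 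Your supplementary appeal to ``transitivity of the $G/\Cen(G)$-action on the roots'' is circular: that group structure is established in Part~2 only under the hypothesis that the polynomial is irreducible. The paper closes this with a separate computation (Step~A): it constructs $t=y^{q^2/p}-b_n+\tilde{s}^{q/p}+\cdots$ with $b_n^p=a_n$, successively stripping approximate $p$-th roots from $s_{\rho}(y)^q$, so that $pv(t)=v(a_ny)$ and $v_{\L}(p^{q^2}t^{-(p-1)(q+1)})=v_{\L}(p)/q^2$, which forces $q^2\mid e(\L/K)$. Some such extra input is indispensable.

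Two further supporting facts are used but not established. First, Lemma \ref{maurizio} carries the hypothesis $\Gal(\M^{\Cen(G)}/K)_2=\{1\}$; verifying it is the content of the paper's Steps C--D, a nontrivial computation showing $v(\sigma(t)-t)=q^2v(\pi_K)$ for every $\sigma\in\Gal(\L/K)-\{1\}$, whose key point is that the reduction $\bar v$ of $(\sigma(y)-y)/\lambda^{p/(1+q)}$ satisfies the prolongation equation of Proposition \ref{lem:prolon} and therefore the relevant leading form cannot vanish. Without this you get neither $u\in 1+q\N$ nor the stated shape $G_0=G_1\supsetneq G_2$. Second, in Part~2 you read off $|G|=pq^2$ from field degrees, which presupposes $K(y)\subseteq\M$ and that $K(y,f_{c,\rho}(y)^{1/p})/K$ is Galois of that degree; the paper obtains $K(y_1,\dots,y_{q^2})\subseteq\M$ from the exact equality $v(y_i-y_j)=v(\lambda^{p/(1+q)})$ (Step~VII, via $L_{c,\rho}'(y_i)=\prod_{j\neq i}(y_i-y_j)$), so that every nontrivial permutation of the roots acts on the stable reduction by a nontrivial translation, and then maximality by a Frattini argument (the image of $\Gal(\M/K)$ surjects onto $\Aut_k(\mathcal{C}_k)_1^{\#}/\Cen$, hence has index at most $p$, hence contains $\Phi=\Cen$). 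Finally, for $p=2$ your ``direct computation'' of the break hides the real work: the paper must first normalize $f_{\rho}(y)u^2=1+\rho_{n-1}y^{1+q/2}+2y^{q/2}h+r$ (Step~E) to bound $v_{\M}(\mathcal{D}_{\M/\L})$ from above, and then combine a parity constraint with Lemma \ref{maurizio} and non-commutativity of $G$ to pin $u=1+q$. The conductor computation in your last display is correct once the filtration is known.
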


\begin{proof}
\textit{1.} Assume that $v(c) \geq v(\lambda^{p/(1+q)})$. Set $\lambda^{p/(1+q)}T=X$ and ${\lambda W+1=Y}$. Then, the equation defining $C_{c,\rho}/K$ becomes
\[ (\lambda W+1)^p=1+\sum_{k=0}^{n-1}\rho_k\lambda^{p(1+p^k)/(1+q)}T^{1+p^k}+c\lambda^{pq/(1+q)}T^q+\lambda^pT^{1+q}. \]
After simplification by $\lambda^p$ and reduction modulo $\pi_K$ this equation gives :
\begin{equation}\label{equa:1}
 w^p-w=\sum_{k=0}^{n-1}\bar{u}_kt^{1+p^k}+at^q+t^{1+q}, \; a \in k.
\end{equation}
By Hurwitz formula the genus of the curve defined by \eqref{equa:1} is seen to be that of $C_{c,\rho}/K$ . Applying  \cite{Liu} 10.3.44, there is a component in the stable reduction birationally given by \eqref{equa:1}. The stable reduction being a tree, the curve $C_{c,\rho}/K$ has good reduction over $K$.\\

\noindent \textit{2.} The proof is divided into eight steps. Let $y$ be a root of $L_{c,\rho}(X)$. \\

\noindent \textbf{Step I :} \textsl{One has $v(y)=v(a_nc)/q^2$.}

\noindent By expanding $L_{c,\rho}(X)$, one shows that its Newton polygon has a single slope $v(a_nc)/q^2$. The polynomial  $L_{c,\rho}(X)$ has degree $q^2$ and its leading (resp. constant) coefficient has valuation $0$ (resp. $v(a_nc)$).
One examines monomials from $a_nf_{c,\rho}^{q-1}(X)(c+X)$. Since $v(c) < v(\lambda^{p/(1+q)})$, one checks that 
\[\forall 1 \leq i \leq q^2-1, \; \frac{v(a_n)}{q^2-i} \geq \frac{v(a_nc)}{q^2}. \] Then one examines monomials from $(\rho_iX)^{q/p^i}p^{d_i}f_{c,\rho}(X)^{q(p^i-1)/p^i}$. They have degree  at least $q/p^i$, thus one checks that 
\[\forall \; 1 \leq i \leq n-1, \; \frac{q/p^iv(\rho_i)+d_iv(p)}{q^2-q/p^i} \geq  \frac{v(a_nc)}{q^2}. \]
The monomial $X^{q^2}$ in $s_{1,\rho}(X)^q$ corresponds to the point $(0,0)$ in the Newton polygon of  $L_{c,\rho}(X)$, the other monomials of $s_{1,\rho}(X)^q$ produce a slope greater than $v(\rho_i)/(q-p^i)$ and one checks that
\[ \forall \; 0 \leq i \leq n-1,\; \frac{v(\rho_i)}{q-p ^i} \geq \frac{v(a_nc)}{q^2}. \]
Note that \textbf{Step I} implies that $v(f_{c,\rho}(y))=0$, we will use this remark throughout this proof.\\
 
\noindent \textbf{Step II :} \textsl{Define $S$ and $T$ by $\lambda^{p/(1+q)}T=(X-y)=S$. Then $f_{c,\rho}(S+y)$ is congruent modulo $\lambda^p \mathfrak{m}[T]$ to}
\begin{align*}
 f_{c,\rho}(y)+s_{1,\rho}(y)S+\sum_{k=0}^{n-1}\rho_kS^{1+p^k}+\sum_{k=1}^{n-1}\rho_kyS^{p^k}+(c+y)S^q+S^{1+q} .
\end{align*}
Using the following formula for $A \in K^{\rm alg}$ with $v(A)>0$ and $B \in (K^{\rm alg})^{\circ}[T]$
\begin{equation*}
k\geq 1,\; (A+B)^{p^k} \equiv (A^{p^{k-1}}+B^{p^{k-1}})^p \md p^2 \mathfrak{m}[T], 
\end{equation*}
one computes $\md \lambda^p \mathfrak{m}[T]$
\[f_{c,\rho}(y+S) = 1+\sum_{k=0}^{n-1}\rho_k(y+S)^{1+p^k}+(y+S)^{1+q}+c(y+S)^q\]
\[\equiv 1+\rho_0(y+S)^2+\sum_{k=1}^{n-1}\rho_k(y+S)(y^{p^{k-1}}+S^{p^{k-1}})^p+(y+S+c)(y^{q/p}+S^{q/p})^p.\]
Using \textbf{Step I}, one checks that for all $1 \leq k \leq n-1$ 
\[ \rho_k(y^{p^{k-1}}+S^{p^{k-1}})^p \equiv \rho_k(y^{p^k}+S^{p^k}) \md \lambda^p \mathfrak{m}[T], \]
and $(y^{q/p}+S^{q/p})^p \equiv y^q+S^q \md \lambda^p \mathfrak{m}[T]$. It follows that 
\[f_{c,\rho}(y+S) \equiv 1+\rho_0(y+S)^2+\sum_{k=1}^{n-1}\rho_k(y+S)(y^{p^k}+S^{p^k})+(y+c+S)(y^q+S^q).\]
One easily concludes from this last expression.\\

\noindent \textbf{Step III :} \textsl{Let $R_1:=K[y]^{\circ}$. For all $0 \leq i \leq n$, one defines $A_i(S) \in R_1[S]$ and $B_i \in R_1$ by induction :}
\[B_n:= -s_{1,\rho}(y), \;\;\forall \; 1 \leq i \leq n-1,  \;B_i:=  \frac{f_{q,c}(y)B_{i+1}^p}{(-pf_{c,\rho}(y))^p}-y\rho_{n-i},\; \;  \]
\[ {\rm and} \; B_0:=\frac{f_{c,\rho}(y)B_1^p}{(-pf_{c,\rho}(y))^p},\]
\[\; \; \; A_0(S):=\;  0 \;  {\rm and} \;\forall\; 0 \leq i \leq n-1\; \; SA_{i+1}(S):= SA_i(S)-\frac{B_{i+1}S^{q/p^{i+1}}}{pf_{q,c}(y)^{(p-1)/p}}.\]
\textsl{Then for all $0\leq i \leq n-1$, $v(B_{i+1})= (1+\dots+p^i)v(p)/p^i+v(c)/p^{i+1}$ and modulo $\lambda^{\frac{pq^2}{q+1}} \mathfrak{m}$ one has }
\begin{equation}\label{eqStepIII}
B_n^q \equiv \frac{a_n}{(-1)^q}f_{c,\rho}(y)^{q-1}B_0+\sum_{k=1}^{n-1}(\rho_ky)^{q/p^k}(-p)^{d_k}f_{c,\rho}(y)^{q(p^k-1)/p^k}.
\end{equation}
\noindent We prove the claim  about $v(B_{i+1})$ by induction on $i$. Using \textbf{Step I}, one checks that $\forall  \; 0 \leq k \leq n-1, \; v(\rho_ky^{p^k}) > v(y^q)$, so $v(B_n)=v(y^q)$. Assume that we have shown the claim for $i$, then one checks that $v((B_{i+1}/p)^p) < v(y\rho_{n-i})$ and one deduces $v(B_i)$ from the definition of $B_i$. According to the expression of $v(B_i)$, one has $\forall \; 0 \leq i \leq n, A_i(S) \in R_1[S]$.

Then we prove the second part of \textbf{Step III}. From the definition of the $B_i$'s one obtains that for all $ 1 \leq i \leq n-1$ 
\begin{equation}\label{equa:2}
B_{n-i+1}^{q/p^{i-1}} = (-p)^{q/p^{i-1}}f_{c,\rho}(y)^{q(p-1)/p^i}(y\rho_i+B_{n-i}(y))^{q/p^i}.
\end{equation}
Using \textbf{Step I} and $v(B_{n-1})$ one checks that $\forall \; 1 \leq k \leq q/p-1$
\[p^q\binom{q/p}{k}(y\rho_1)^kB_{n-1}^{q/p-k} \equiv 0  \md \lambda^{pq^2/(1+q)}\mathfrak{m} ,\]
so $ p^q(y\rho_1+B_{n-1})^{q/p} \equiv p^q((y\rho_1)^{q/p}+B_{n-1}^{q/p})\md \lambda^{pq^2/(1+q)}\mathfrak{m}$. Thus, applying equation \eqref{equa:2} with $i=1$, one gets 
\begin{align*}
B_n^q &= (-p)^qf_{c,\rho}(y)^{q(p-1)/p}(y\rho_1+B_{n-1})^{q/p}\\
      &\equiv (-p)^qf_{c,\rho}(y)^{q(p-1)/p}((y\rho_1)^{q/p}+B_{n-1}^{q/p}) \md \lambda^{pq^2/(1+q)}\mathfrak{m}.
\end{align*} 
One checks using \textbf{Step I} and $v(B_{n-i})$  that  $\forall \; 1 \leq i \leq n-1$ and $1 \leq k \leq q/p^i-1$
\[ p^{q+\dots+q/p^{i-1}}\binom{q/p^i}{k}B_{n-i}^{q/p^i-k}(y\rho_i)^k \equiv 0 \md \lambda^{pq^2/(1+q)}\mathfrak{m}, \]
then by induction on $i$, using equation \eqref{equa:2}, one shows that modulo $\lambda^{pq^2/(1+q)}\mathfrak{m}$
\begin{equation*}
B_n^q\equiv (-p)^{p+\dots+q}f_{c,\rho}(y)^{q-1}B_0+ \sum_{k=1}^{n-1}(\rho_ky)^{q/p^k}(-p)^{d_k}f_{c,\rho}(y)^{q(p^k-1)/p^k}.
\end{equation*}
\noindent \textbf{Step IV :} \textsl{One has modulo $\lambda^p\mathfrak{m}[T]$ }
\begin{equation*}
f_{c,\rho}(S+y) \equiv f_{c,\rho}(y)+s_{1,\rho}(y)S+\sum_{k=0}^{n-1}\rho_kS^{1+p^k}+\sum_{k=1}^{n-1}y\rho_kS^{p^k}+B_0S^q+S^{1+q}.
\end{equation*}

\noindent 
Since $L_{c,\rho}(y)=0$, one has
\begin{equation}\label{equa:3}
s_{1,\rho}(y)^q=a_nf_{c,\rho}(y)^{q-1}(c+y)+(-1)^q\sum_{k=1}^{n-1}(\rho_ky)^{q/p^k}(-p)^{d_k}f_{c,\rho}(y)^{q(p^k-1)/p^k}.
\end{equation}
Using $B_n:=-s_{1,\rho}(y)$, equations \eqref{eqStepIII} and \eqref{equa:3} one gets
\begin{equation*}
a_nf_{c,\rho}(y)^{q-1}(c+y-B_0) \equiv 0 \md \lambda^{pq^2/(q+1)} \mathfrak{m}.
\end{equation*}
which is equivalent to $S^q(y+c-B_0) \equiv 0 \md  \lambda^p\mathfrak{m}[T]$.
Then, \textbf{Step IV} follows from \textbf{Step II}.\\

\noindent \textbf{Step V :} \textsl{One has }
\begin{equation*}
f_{c,\rho}(S+y) \equiv (f_{c,\rho}(y)^{1/p}+SA_n(S))^p+\sum_{k=0}^{n-1}\rho_kS^{1+p^k}+S^{1+q} \md \lambda^p\mathfrak{m}[T].
\end{equation*}
Let $R:=\sum_{k=0}^{n-1}\rho_kS^{1+p^k}+S^{1+q}+s_{1,\rho}(y)S$. Since $B_n=-s_{1,\rho}(y)$ one has
\begin{align}\label{equa:4}
&(f_{c,\rho}(y)^{1/p}+SA_n(S))^p+\sum_{k=0}^{n-1}\rho_kS^{1+p^k}+S^{1+q} \nonumber\\
= \; &(f_{c,\rho}(y)^{1/p}+SA_n(S))^p + B_nS+ R \nonumber\\
= \; &\Big(f_{c,\rho}(y)^{1/p}+SA_{n-1}(S)-\frac{B_nS}{pf_{q,c}(y)^{(p-1)/p}}\Big)^p+B_nS+R\nonumber\\
= \;  &(f_{c,\rho}(y)^{1/p}+SA_{n-1}(S))^p+\Big(\frac{-B_nS}{pf_{q,c}(y)^{(p-1)/p}}\Big)^p+B_nS+R+\Sigma,
\end{align}
where 
\begin{equation}\label{equa:5}
\Sigma = \sum_{k=1}^{p-1}\binom{p}{k}(f_{c,\rho}(y)^{1/p}+SA_{n-1}(S))^{p-k}\Big(\frac{-B_nS}{pf_{q,c}(y)^{(p-1)/p}}\Big)^k.
\end{equation}
Using the expression of $v(B_n)$ computed in \textbf{Step III}, one checks that the terms with $k \geq 2$ in \eqref{equa:5} are zero modulo $\lambda^p\mathfrak{m}[T]$. It implies the following relations
\begin{align*}
\Sigma+ B_nS \equiv \;   & B_nS\bigg[1-\frac{(f_{c,\rho}(y)^{1/p}+SA_{n-1}(S))^{p-1}}{f_{c,\rho}(y)^{(p-1)/p}}\bigg]\\
\equiv \; & \frac{B_nS}{f_{c,\rho}(y)^{(p-1)/p}} \Big[f_{c,\rho}(y)^{(p-1)/p}-(f_{c,\rho}(y)^{1/p}+SA_{n-1}(S))^{p-1}\Big]\\
\equiv \; & \frac{B_nS}{f_{c,\rho}(y)^{(p-1)/p}}\Big[-\sum_{k=1}^{p-1}\binom{p-1}{k}f_{c,\rho}(y)^{(p-1-k)/p}(SA_{n-1}(S)^k\Big]\\
\equiv \;&0 \md \lambda^p\mathfrak{m}[T],\; {\rm since \;  for \; } k \geq 1, \; B_nS^{k+1} \equiv 0 \md \lambda^p\mathfrak{m}[T].
\end{align*}
According to the definition of $B_{n-1}$ (see \textbf{Step III}) one obtains
\begin{equation}\label{equa:6}
\eqref{equa:4} \equiv (f_{c,\rho}(y)^{1/p}+SA_{n-1}(S))^p + R + B_{n-1}S^p+y\rho_1S^p \md \lambda^p\mathfrak{m}[T].
\end{equation}
Using the same process, one shows by induction on $i$ that \eqref{equa:4} is congruent to
\begin{equation}\label{equa:7}
(f_{c,\rho}(y)^{1/p}+SA_{i+1}(S))^p + B_{i+1}S^{p^{n-i-1}}+\sum_{k=1}^{n-i-1}y\rho_kS^{p^k}  + R \; \md \lambda^p\mathfrak{m}[T].
\end{equation}

Thus, one applies equation \eqref{equa:7} with $i=0$ 
\[ \eqref{equa:4} \equiv (f_{c,\rho}(y)^{1/p}+SA_{1}(S))^p + B_{1}S^{q/p}+\sum_{k=1}^{n-1}y\rho_kS^{p^k}  + R \; \md \lambda^p\mathfrak{m}[T].\]
One defines $\Sigma'$ by $(f_{c,\rho}(y)^{1/p}+SA_{1}(S))^p = f_{c,\rho}(y)+(SA_{1}(S))^p+\Sigma'$. From $pf_{c,\rho}(y)^{(p-1)/p}SA_1(S)=-B_1S^{q/p}$ (see the definition of $SA_1(S)$) one gets  
\[\Sigma'+B_1S^{q/p}=\sum_{k=2}^{p-1}\binom{p}{k}f_{c,\rho}(y)^{(p-k)/p}(SA_1(S))^k, \]
so using the expression of $v(B_1)$ computed in \textbf{Step III}, one checks that 
$\Sigma'+B_1S^{q/p} \equiv 0 \md \lambda^p\mathfrak{m}[T]$. From the definition of $SA_1(S)$ and $B_0$ one has $(SA_1(S))^p=B_0S^q$, thus
\[\eqref{equa:4}  \equiv f_{c,\rho}(y)+B_0S^q+\sum_{k=1}^{n-1}y\rho_kS^{p^k}  + R  \md \lambda^p\mathfrak{m}[T]. \]
Then, \textbf{Step V} follows from \textbf{Step IV} and this last relation.\\

\noindent \textbf{Step VI :} \textsl{The curve $C_{c,\rho}/K$ has good reduction over $K(y,f_{c,\rho}(y)^{1/p})$.}

\noindent According to \textbf{Step V}, the change of variables in $K(y,f_{c,\rho}(y)^{1/p})$
\begin{align*}
X=\lambda^{p/(1+q)}T+y=S+y \; \;{\rm and} \; \; Y=\lambda W+f_{c,\rho}(y)^{1/p}+SA_n(S),
\end{align*}
induces in reduction $w^p-w=\sum_{k=0}^{n-1}\bar{u}_kt^{1+p^k}+t^{1+q}$ with genus $g(C_{c,\rho})$. So \cite{Liu} 10.3.44 implies that this change of variables gives the stable model. Note that the $\rho_k$'s were chosen to obtain this equation for the special fiber of the stable model.\\

\noindent \textbf{Step VII :} \textsl{For any distinct roots $y_i$, $y_j$ of $L_{c,\rho}(X)$, $v(y_i-y_j)=v(\lambda^{p/(1+q)})$.}

\noindent The changes of variables $\lambda^{p/(1+q)}T=X-y_i$ and $\lambda^{p/(1+q)}T=X-y_j$ induce equivalent Gauss valuations of $K(C_{c,\rho})$, else applying \cite{Liu} 10.3.44 would contradict the uniqueness of the stable model. Thus ${v(y_i-y_j)} \geq v(\lambda^{p/(1+q)})$.

One checks that $v(f_{c,\rho}'(y))>0$, $\forall \; 0 \leq k \leq n-1$ $v(\rho_k^{q/p^k}p^{d_k}q/p^k)>v(a_n)$, ${v(s_{1,\rho}'(y))>0}$, $v(s_{1,\rho}(y))=v(y^q)$ and $v(qs_{1,\rho}(y)^{q-1}s_{1,\rho}'(y))>v(a_n)$, so 
\[ v(L_{c,\rho}'(y))=v(a_n)=(q^2-1)v(\lambda^{p/(1+q)}). \]
Taking into account that $L_{c,\rho}'(y_i)=\prod_{j \neq i}(y_i-y_j)$ and $\deg L_{c,\rho}(X)=q^2$, one obtains $v(y_i-y_j) = v(\lambda^{p/(1+q)})$.\\

\noindent \textbf{Step VIII :} \textsl{If $L_{c,\rho}(X)$ is irreducible over $K$, then $K(y,f_{c,\rho}(y)^{1/p})$ is the monodromy extension $\M$ of $C_{c,\rho}/K$ and $G:=\Gal(\M/K)$ is an extra-special $p$-group of order $pq^2$.}

\noindent Let $(y_i)_{i=1,\dots,q^2}$ be the roots of $L_{c,\rho}(X)$, $\L:=K(y_1, \dots, y_{q^2})$ and  $\M/K$ be the monodromy extension of $C_{c,\rho}/K$. Any  $\tau \in \Gal(\L/K) -\lbrace 1 \rbrace$ is such that $\tau(y_i)=y_j$ for some $i \neq j$. Thus, the change of variables 
\begin{align*}
X=\lambda^{p/(1+q)}T+y_i \; \;{\rm and} \; \;  Y=\lambda W+f_{c,\rho}(y)^{1/p}+SA_n(S),
\end{align*}
 induces the stable model and $\tau$ acts on it by :
\begin{equation*}
\tau(T)=\frac{X-y_j}{\lambda^{p/(1+q)}}, \; \; \;  {\rm hence} \; \; \;  T-\tau(T)=\frac{y_j-y_i}{\lambda^{p/(1+q)}}.
\end{equation*}
According to \textbf{Step VII}, $\tau$ acts non-trivially on the stable reduction. It follows that $\L \subseteq \M$. Indeed if $\Gal(K^{\rm alg}/\M) \nsubseteq \Gal(K^{\rm alg}/\L)$ it would exist $ {\sigma \in \Gal(K^{\rm alg}/\M)}$ inducing $\bar{\sigma} \neq \Id \in \Gal(\L/K)$, which would contradict the characterization of $\Gal(K^{\rm alg}/\M)$ (see remark after Theorem \ref{stableth}) . \\
\indent According to \cite{LM}, the $p$-Sylow subgroup $\Aut_k(\mathcal{C}_k)_1^{\#}$ of $\Aut_k(\mathcal{C}_k)^{\#}$ is an extra-special $p$-group of order $pq^2$. Moreover, one has  :
\begin{align*}
0 \to {\rm Z}(\Aut_k(\mathcal{C}_k)_1^{\#})\to \Aut_k(\mathcal{C}_k)_1^{\#} \to (\Z/p\Z)^{2n} \to 0,
\end{align*}
where $(\Z/p\Z)^{2n}$ is identified with the group of translations $t \mapsto t+a$ extending to elements of $\Aut_k(\mathcal{C}_k)_1^{\#}$. Therefore we have morphisms 
\begin{equation*}
 \Gal(\M/K) \overset{i}{\hookrightarrow} \Aut_k(\mathcal{C}_k)_1^{\#} \overset{\varphi}{\to} \Aut_k(\mathcal{C}_k)_1^{\#} /{\rm Z}(\Aut_k(\mathcal{C}_k)_1^{\#}).
\end{equation*}
The composition is seen to be surjective since the image contains the $q^2$ translations 
$t \mapsto t+ \overline{(y_i-y_1)/\lambda^{p/(1+q)}}$. Consequently, $i(\Gal(\M/K))$ is a subgroup of $\Aut_k(\mathcal{C}_k)_1^{\#}$ of index at most $p$. So it contains $\Phi(\Aut_k(\mathcal{C}_k)_1^{\#})={\rm Z}(\Aut_k(\mathcal{C}_k)_1^{\#})=\Ker \varphi$. It implies that $i$ is an isomorphism and ${\left[\M:K\right]=pq^2}$. By \textbf{Step VI}, one has $\M \subseteq K(y,f_{q,c}(y)^{1/p})$, hence $\M = K(y,f_{q,c}(y)^{1/p})$.\\

We show that $K(y_1)/K$ is Galois and that $\Gal(\M/K(y_1)) ={\rm Z}(G)$. Indeed, $\M/K(y_1)$ is $p$-cyclic and generated by $\sigma$ defined by :
\[ \sigma(y_1)=y_1 \;  {\rm and} \; \sigma(f_{c,\rho}(y_1)^{1/p})=\zeta_pf_{c,\rho}(y_1)^{1/p}. \]
According to \textbf{Step VI}, $\sigma$ acts on the stable model by :
\begin{equation*}
\sigma(S)=S, \; \; \;  \sigma(Y)=Y=\lambda \sigma(W)+\zeta_pf_{c,\rho}(y_1)^{1/p}+SA_n(S).
\end{equation*}
Hence 
\begin{equation*} 
\sigma(W)= W-f_{c,\rho}(y_1)^{1/p}.
\end{equation*}
It follows that, in reduction, $\sigma$ induces a morphism that generates ${\rm Z}(\Aut_k(\mathcal{C}_k)_1^{\#})$. It implies that $K(y_1)/K$ is Galois, $\Gal(\M/K(y_1)) ={\rm Z}(G)$ and $\Gal(K(y_1)/K)  \simeq (\Z/p\Z)^{2n}$.\\

\noindent \textit{3.} Let $L_{\rho}(X):=L_{1,\rho}(X)$, $f_{\rho}(X):=f_{1,\rho}(X)$, $s_{\rho}(y):=s_{1,\rho}(y)$, $y$ be a root of $L_{\rho}(X)$ and ${b_n:=(-1)(-p)^{1+p+\dots+p^{n-1}}}$. Note that  $b_n^p=a_n$, $L=K(y)$ and we do not assume $p=2$ until \textbf{Step E}.\\

\noindent \textbf{Step A :} \textsl{The polynomial $L_{\rho}(X)$ is irreducible over $K$.}

\noindent Let $\tilde{s}:=s_{\rho}(y)-y^q$, $\sigma:=\sum_{k=1}^q\binom{q}{k}\tilde{s}^ky^{q(q-k)}$ and $R_1:=\sum_{k=1}^{p-1}\binom{p}{k}y^{kq^2/p}(-b_n)^{p-k}$. Since $L_{\rho}(y)=0$ one has
\begin{equation*}
y^{q^2}+\sigma=s_{\rho}(y)^q =a_nf_{\rho}(y)(1+y)+\sum_{k=1}^{n-1}(\rho_ky)^{q/p^k}(-p)^{d_k}(-1)^qf_{\rho}(y)^{q(p^k-1)/p^k}.
\end{equation*}
It implies that $(y^{q^2/p}-b_n)^p$ equals
\begin{equation*}
 a_n \big[f_{\rho}(y)(1+y)+(-1)^p\big]+\sum_{k=1}^{n-1}(\rho_ky)^{q/p^k}(-p)^{d_k}(-1)^qf_{\rho}(y)^{q(p^k-1)/p^k}+R_1-\sigma.\\
\end{equation*}
We are going to remove monomials with valuation greater than $v(a_ny)$ in the above expression by taking $p$-th roots. Note that if  $\forall i \geq 1$, $\rho_i=0 $, then one could skip most of \textbf{Step A} (see equation \eqref{exp:t}). Assume that $\rho_i \neq 0$ for some $i \geq 1$, let $j:=\max \lbrace 1 \leq i \leq n-1, \rho_i \neq 0 \rbrace $ and $l:=\min \lbrace 1 \leq i \leq n-1, \rho_i \neq 0 \rbrace $. The following relations are straight forward computations using \textbf{Step I} :
\begin{align}\label{valu:1}
& v(f_{\rho}(y)(1+y)+(-1)^p)=v(y), \; \;  v(\tilde{s})=v(\rho_jy^{p^j}) , \; \; v(\sigma)=qv(\tilde{s}), \\
&v\bigg(\sum_{k=1}^{n-1}(\rho_ky)^{q/p^k}(-p)^{d_k}(-1)^qf_{\rho}(y)^{q(p^k-1)/p^k}\bigg)=v((\rho_ly)^{p^{n-l}}p^{d_l}). \nonumber
\end{align}
Then one checks that 
\begin{align}\label{valu:2}
 v(R_1)>v(a_ny) > v((\rho_ly)^{p^{n-l}}p^{d_l}) > v(\sigma).
\end{align}
It implies that $v((y^{q^2/p}-b_n)^p)=qv(\tilde{s})$, so one considers $(y^{q^2/p}-b_n+\tilde{s}^{q/p})^p$. By expanding this last expression, using \eqref{valu:1}, \eqref{valu:2} and taking into account
\begin{align*}
v(\sum_{k=1}^{q-1}\binom{q}{k}\tilde{s}^ky^{q(q-k)})>v(a_ny), \; \; v(\sum_{k=1}^p\binom{p}{k}(y^{q^2/p}-b_n)^k\tilde{s}^{(p-k)q/p})>v(a_ny),
\end{align*}
one obtains that $pv(y^{q^2/p}-b_n+\tilde{s}^{q/p})=v((\rho_ly)^{p^{n-l}}p^{d_l})$, leading us to consider 
\[(y^{q^2/p}-b_n+\tilde{s}^{q/p}+(\rho_ly)^{q/p^{l+1}}(-p)^{d_l/p}f_{\rho}(y)^{q(p^l-1)/p^{l+1}})^p. \]
By expanding this expression and using \eqref{valu:1} and \eqref{valu:2} one easily checks that it has valuation $ v((\rho_{l_1}y)^{p^{n-{l_1}}}p^{d_{l_1}})$ where $l_1 := \min \lbrace l+1  \leq  i \leq n-1, \rho_i \neq 0 \rbrace$. By induction one shows that 
\begin{equation}\label{exp:t}
 t:=y^{q^2/p}-b_n+\tilde{s}^{q/p}+\sum_{k=1}^{n-1}(\rho_ky)^{q/p^{k+1}}(-p)^{d_k/p}f_{\rho}(y)^{q(p^k-1)/p^{k+1}},
\end{equation}
satisfies $pv(t)=v(a_ny)$. Then $v_{\L}(p^{q^2}t^{-(p-1)(q+1)})=v_{\L}(p)/q^2= [\L:\Qpur]/q^2$, so $q^2$ divides $[\L:K]$. It implies that $L_{\rho}(X)$ is irreducible over $K$.\\

\noindent \textbf{Step B :} \textsl{Reduction step.}

\noindent The last non-trivial group $G_{i_0}$ of the lower ramification filtration $(G_i)_{i \geq 0}$ of $G:=\Gal(\M/K)$  is a subgroup of ${\rm Z}(G)$ (\cite{Ser} IV \S 2 Corollary 2 of Proposition 9) and as ${\rm Z}(G) \simeq \Z/p\Z$, it follows that $G_{i_0} ={\rm Z}(G)$.\\
\indent According to \textbf{Step VIII} the group $H:=\Gal(\M/ \L)$ is ${\rm Z}(G)$. Consequently, the filtration $(G_i)_{i \geq 0}$ can be deduced from that of $\M/\L$ and $\L/K$ (see \cite{Ser} IV \S 2 Proposition 2 and Corollary of Proposition 3).\\

\noindent \textbf{Step C :} \textsl{Let $\sigma \in \Gal(\L/K) - \lbrace 1 \rbrace$, then $v(\sigma(t)-t)=q^2v(\pi_K)$.}

\noindent Let $y':=\sigma(y)$, one deduces the following easy lemma from \textbf{Step VII}.
\begin{lem}\label{lem:stepC}
For any $n \geq 0$, $v(y^n-y'^n) \geq nv(y)$.
\end{lem}
\noindent Recall the definition $\tilde{s}:=2\rho_0y+\sum_{k=1}^{n-1}\rho_ky^{p^k}$. First one shows that modulo  ${(y-y')^{q^2/p}\mathfrak{m}}$ one has 
\begin{align}\label{equa:8}
\sigma(\tilde{s})^{q/p}-\tilde{s}^{q/p}  \equiv (2\rho_0)^{q/p}(y'^{q/p}-y^{q/p})+\sum_{k=1}^{n-1}\rho_k^{q/p}(y'^{qp^k/p}-y^{qp^k/p}).
\end{align}
Indeed, let $(m_i)_{i=0,\dots,n-1} \in \N^n$ such that $m_0+m_1+\dots+m_{n-1}=q/p$ and $t:=m_0+m_1p+\dots+m_{n-1}p^{n-1}$, then using lemma \ref{lem:stepC} one checks that
\begin{align*}
v(p\rho_0^{m_0}\rho_1^{m_1}\dots\rho_{n-1}^{m_{n-1}}(y^t-y'^t))>\frac{q^2}{p}v(y-y').
\end{align*}
This inequality implies  \eqref{equa:8}.\\
\indent Let $1 \leq k \leq n-1$ and write $f_{\rho}(y)^{(p^k-1)q/p^{k+1}}=1+\sum_{i \in I_k}\alpha_{i,k}y^i$, for some set $I_k$. Then 
\begin{align*}
&y'^{q/p^{k+1}}f_{\rho}(y')^{(p^k-1)q/p^{k+1}}-y^{q/p^{k+1}}f_{\rho}(y)^{(p^k-1)q/p^{k+1}}\\
 = & \; y'^{q/p^{k+1}}-y^{q/p^{k+1}}+\sum_{i \in I_k}\alpha_{i,k}(y'^i-y^i).
\end{align*} 
Let $i \in I_k$. Consider the case when $v(\alpha_{i,k}) \geq v(\rho_h)$ for some $0 \leq h \leq n-1$, then using \textbf{Step VII}, one checks that $\forall \; 1 \leq k \leq n-1$, $v(\alpha_{i,k}) \geq v(\rho_h)>qv(y'-y)/p^{k+1}$. If this case does not occur, then according to the expression of $f_{\rho}(y)$ one has $i \geq q/p^{k+1}+q$ and using lemma \ref{lem:stepC} one checks that $v(y'^i-y^i)> qv(y'-y)/p^{k+1}$. In any case $v(\alpha_{i,k}(y'^i-y^i)) >  qv(y'-y)/p^{k+1}$ and one checks that
\begin{align}\label{equa:9}
v(p^{d_k/p}\rho_k^{q/p^{k+1}}\alpha_{i,k}(y'^i-y^i))> q^2v(y'-y)/p. 
\end{align}
Taking into account \eqref{exp:t}, \eqref{equa:8} and \eqref{equa:9}, one gets $\md \; (y'-y)^{q^2/p}\mathfrak{m}$ 
\begin{align}\label{equa:10}
\sigma(t)-t & \equiv \;y'^{q^2/p}- y^{q^2/p}+(2\rho_0)^{q/p}(y'^{q/p}-y^{q/p})\\
+&\sum_{k=1}^{n-1}\rho_k^{q/p}(y'^{qp^k/p}-y^{qp^k/p})+\sum_{k=1}^{n-1}(-p)^{d_k/p}\rho_k^{q/p^{k+1}}(y'^{q/p^{k+1}}-y^{q/p^{k+1}}).\nonumber
\end{align}
Using lemma \ref{lem:stepC}, it is now straight forward to check the following relations $\md \; (y'-y)^{q^2/p}\mathfrak{m}$  .
\begin{align*}
y'^{q^2/p}-y^{q^2/p} &\equiv (y'-y)^{q^2/p},\\
\rho_k^{q/p}(y'^{qp^k/p}-y^{qp^k/p}) &\equiv \rho_k^{q/p}(y'-y)^{qp^k/p},\\
(-p)^{d_k/p}\rho_k^{q/p^{k+1}}(y'^{q/p^{k+1}}-y^{q/p^{k+1}}) &\equiv (-p)^{d_k/p}\rho_k^{q/p^{k+1}}(y'-y)^{q/p^{k+1}}.
\end{align*}
Using \textbf{Step VII}, one sees that each of these three elements has valuation $q^2v(y'-y)/p$, thus one gets
\begin{align}\label{equa:11}
(\sigma(t)-t)^p \equiv &(y'-y)^{q^2}+(2\rho_0)^q(y'-y)^q+\sum_{k=1}^{n-1}\rho_k^q(y'-y)^{qp^k}\\
+&\sum_{k=1}^{n-1}(-p)^{d_k}\rho_k^{q/p^{k}}(y'-y)^{q/p^{k}}  \;\md \;   (y'-y)^{q^2}\mathfrak{m}.\nonumber
\end{align}
Now  recall  \textbf{Step VII}, the definitions of the $\rho_k$'s and of $\lambda$, then for some $v \in R^{\times}$ and $\Sigma \in R$
\begin{align*}
\rho_k=&u_k\lambda^{p(q-p^k)/(1+q)},\; \; y'-y=v\lambda^{p/(1+q)}\; \; {\rm and} \;  -p=\lambda^{p-1}+p\lambda\Sigma.
\end{align*}
Since $q^2v(y'-y)=\frac{pq^2}{1+q}v(\lambda)$, equation \eqref{equa:11} becomes
\begin{align*}
(\sigma(t)-t)^p \equiv \lambda^{\frac{q^2p}{1+q}}\big[ v^{q^2}+(2u_0v)^q+\sum_{k=1}^{n-1}(u_k^qv^{qp^k}+(u_kv)^{q/p^k})\big] \md \lambda^{\frac{q^2p}{1+q}}\mathfrak{m}.
\end{align*}
From the action of $\sigma$ on the stable reduction (see \textbf{Step VIII}), one has that the automorphism of $\Proj_k$ given by $t  \mapsto t+ \bar{v}$ has a prolongation to $A_u/k$, so Proposition \ref{lem:prolon} implies that 
\begin{align}\label{equa:12} \bar{v}^{q^2}+(2\bar{u}_0\bar{v})^q+\sum_{k=1}^{n-1}(\bar{u}_k^q\bar{v}^{qp^k}+(\bar{u}_k\bar{v})^{q/p^k}) + \bar{v}=0.
\end{align}
Assume that $\bar{v}^{q^2}+(2\bar{u}_0\bar{v})^q+\sum_{k=1}^{n-1}(\bar{u}_k^q\bar{v}^{qp^k}+(\bar{u}_k\bar{v})^{q/p^k}) =0$,
then from \eqref{equa:12} one has $\bar{v}=0$, which contradicts $v \in R^{\times}$. It implies that $v(\sigma(t)-t)=q^2v(\lambda)/(1+q)=q^2v(y-y')/p=q^2v(\pi_K)$.\\

\noindent \textbf{Step D :} \textsl{The ramification filtration of $L/K$ is :}
\begin{equation*}
(G/H)_0=(G/H)_1 \supsetneq (G/H)_2 = \lbrace 1 \rbrace.
\end{equation*}

\noindent Since $K/\Qpur$ is tamely ramified of degree $(p-1)(q+1)$, one has $K=\Qpur(\pi_K)$ with $\pi_K^{(p-1)(q+1)}=p$ for some uniformizer $\pi_K$ of $K$. In particular $z:=\pi_K^{q^2}/t$, is a uniformizer of $\L$. Let ${\sigma \in \Gal(\L/K)-\lbrace 1 \rbrace}$, then 
\begin{align*}
\sigma(z)-z=\frac{t-\sigma(t)}{\sigma(t)t}\pi_K^{q^2}=\frac{t-\sigma(t)}{\pi_K^{q^2}}\frac{\pi_K^{q^2}}{t}\frac{\pi_K^{q^2}}{\sigma(t)} .
\end{align*}
Using \textbf{Step C} one obtains $v(\sigma(z)-z)=2v(z)$, i.e. $(G/H)_2=\lbrace 1 \rbrace$.\\

\noindent \textbf{Step E :} \textit{From now on, we assume $p=2$. Let $s:=(q+1)(2q^2-1)$. There exist $u, h \in L$ and $r\in \pi_L^s\mathfrak{m}$ such that $v_L(2y^{q/2}h)=s$ and}
\begin{align*}
f_{\rho}(y)u^2=1+\rho_{n-1}y^{1+q/2}+2y^{q/2}h+r.
\end{align*}

\noindent To prove the first statement we note that, from the definition of $f_{\rho}(y)$, one has $f_{\rho}(y)=1+T$ with $v(T)=qv(y)$ and $L_{\rho}(y)=0$, thus
\begin{align*}
&\Big(\frac{s_{\rho}^{q/2}(y)}{b_n}\Big)^2=f_{\rho}(y)^{q-1}(1+y)+\sum_{k=1}^{n-1}\frac{(\rho_ky)^{q/2^k}}{2^{2+\dots+2^{n-k}}}f_{\rho}(y)^{q(2^k-1)/2^k},\\
{\rm and \;} &  f_{\rho}(y)^{q-1}(1+y)=1+y+\sum_{k=1}^{q-1}\binom{q-1}{k}T^k(1+y).
\end{align*}
Then, we put $\tilde{\Sigma}:=\sum_{k=1}^{q-1}\binom{q-1}{k}T^k(1+y)$ and 
\begin{align*}
h:=\frac{s_{\rho}^{q/2}(y)}{b_n}+\sum_{k=1}^{n-1}\frac{(\rho_ky)^{q/2^{k+1}}}{2^{1+\dots+2^{n-k-1}}}f_{\rho}(y)^{q(2^k-1)/2^{k+1}} -1.
\end{align*}
Then one computes 
\begin{align*}
h^2&=\Big[ \frac{s_{\rho}^{q/2}(y)}{b_n}+\sum_{k=1}^{n-1}\frac{(\rho_ky)^{q/2^{k+1}}}{2^{1+\dots+2^{n-k-1}}}f_{\rho}(y)^{q(2^k-1)/2^{k+1}}\Big]^2 +1 -2(h+1)\\
&=(\frac{s_{\rho}^{q/2}(y)}{b_n})^2+\sum_{k=1}^{n-1}\frac{(\rho_ky)^{q/2^{k}}}{2^{2+\dots+2^{n-k}}}f_{\rho}(y)^{q(2^k-1)/2^{k}}+\Sigma_1+1-2(h+1)\\
&=2+y+2\sum_{k=1}^{n-1}\frac{(\rho_ky)^{q/2^{k}}}{2^{2+\dots+2^{n-k}}}f_{\rho}(y)^{q(2^k-1)/2^{k}}+\Sigma_1+\tilde{\Sigma}-2(h+1).
\end{align*}
In \textbf{Step III}, we proved that $v(B_n)=qv(y)=2v(b_n)/q$ where $B_n=-s_{\rho}(y)$, so $v( \frac{s_{\rho}^{q/2}(y)}{b_n})=0$ and one checks using \textbf{Step I} that 
\begin{equation}\label{eqStepE}
v(2) > v(y),{\; \rm and \;}\forall \; 1 \leq k \leq n-1, \; v\Big(\frac{(\rho_ky)^{q/2^{k+1}}}{2^{1+\dots+2^{n-k-1}}}\Big) \geq 0,
\end{equation}
thus $v(h+1) \geq 0$ and $v(2(h+1))\geq v(2) >v(y)$.  One checks in the same way that $v(\Sigma_1)>v(y)$. One has $v(\tilde{\Sigma}) \geq v(T) > v(y)$, so $v(h^2)=v(y)$ and $v_L(2y^{q/2}h)=s$.\\
\indent To prove the second statement of \textbf{Step E}, we first remark that $\forall i \geq 1$ $f_{\rho}(y)^i=1+\sum_{k=1}^i\binom{i}{k}T^k=1+\Sigma_i$, whence $v(\Sigma_i) \geq v(T)$. Since, for all $ 0 \leq k \leq n-1$, $v(\rho_ky^{p^k})>qv(y)$ one has $\md \pi_L^s\mathfrak{m}$
\begin{align}\label{equa:13}
\frac{s_{\rho}^{q/2}(y)}{b_n}2y^{q/2} \equiv \big[(2\rho_0y)^{q/2}&+\sum_{k=1}^{n-1}(\rho_ky^{2^k})^{q/2}+y^{q^2/2}\big]\frac{y^{q/2}}{2^{2+\dots+2^{n-1}}}.
\end{align}
One also checks that $\forall i \geq 1$ , $v_L(2y^{q/2}\Sigma_i)>s$, then according to \eqref{eqStepE}, $\forall \; i \geq 1$ and $1 \leq k \leq n-1$
\begin{align}\label{equa:14}
 &v_L\Big(\frac{(\rho_ky)^{q/2^{k+1}}}{2^{1+\dots+2^{n-k-1}}}2y^{q/2}\Sigma_i\Big) > s  {\rm \; and \; one \; checks\; that \; } v_L\big(\frac{(2\rho_0)^{q/2}y^q}{2^{2+\dots+2^{n-1}}}\big) > s.
\end{align}
Thus, applying relations \eqref{equa:13}, \eqref{equa:14} and the definition of $h$, one has 
\begin{align}\label{eqsimpl}
2hy^{q/2} \equiv & \big[\sum_{k=1}^{n-1}(\rho_ky^{2^k})^{q/2}+y^{q^2/2}\big]\frac{y^{q/2}}{2^{2+\dots+2^{n-1}}} \nonumber\\
&+\sum_{k=1}^{n-1}\frac{(\rho_ky)^{q/2^{k+1}}}{2^{1+\dots+2^{n-k-1}}}2y^{q/2}-2y^{q/2} \md  \pi_L^s\mathfrak{m}.
\end{align}
Finally one puts
\begin{align*}
u:=1-y^{q/2}-\sum_{k=0}^{n-2}\frac{y^{2^k(1+q)}}{2^{1+\dots+2^k}}+\sum_{i=1}^{n-1}\sum_{k=n-i-1}^{n-2}\frac{\rho_i^{2^k}}{2^{1+\dots+2^k}}y^{2^k(1+2^i)}=1+\tilde{u},
\end{align*}
and one checks that $v(\tilde{u})=v(y^{q/2})$. From the equality
\begin{align*}
f_{\rho}(y)u^2-1=\sum_{k=0}^{n-1}\rho_ky^{1+2^k}+y^q+y^{1+q}+(1+T)2\tilde{u}+(1+T)\tilde{u}^2,
\end{align*}
taking into account that $v_L(2T\tilde{u})>s$, $v_L(T\tilde{u}^2)>s$, $\forall \; 0 \leq k \leq n-2$, $ v_L(\rho_ky^{1+2^k}) > s$ and expanding $\tilde{u}$ and $\tilde{u}^2$ one gets modulo $\pi_L^s \mathfrak{m}$ 
\begin{align}\label{equa:15}
f_{\rho}(y)u^2-1 \equiv &\rho_{n-1}y^{1+q/2}-2y^{q/2}+2y^q -\sum_{k=1}^{n-2}\frac{2y^{2^k(1+q)}}{2^{1+\dots+2^k}}+\sum_{k=1}^{n-1}\frac{y^{2^k(1+q)}}{2^{2+\dots+2^k}}+ \nonumber\\
&+\sum_{i=1}^{n-1}\sum_{k=n-i-1}^{n-2}\frac{2\rho_i^{2^k}y^{2^k(1+2^i)}}{2^{1+\dots+2^k}}+\sum_{i=1}^{n-1}\sum_{k=n-i}^{n-1}\frac{\rho_i^{2^k}y^{2^k(1+2^i)}}{2^{2+\dots+2^k}}.
\end{align}
Arranging the terms  of \eqref{equa:15} , taking into account that ${v_L(2y^q)>s}$ and for all  $2 \leq i \leq n-1$ and $n-i \leq k \leq n-2$
\begin{equation*}
v_L\Big(\rho_i^{2^k}y^{2^k(1+2^i)}\frac{2}{2^{2+\dots+2^k}}\Big) > s,
\end{equation*}
and comparing with \eqref{eqsimpl}, one obtains $f_{\rho}(y)u^2-1 \equiv \rho_{n-1}y^{1+q/2}+2hy^{q/2}\;  \md \pi_L^s\mathfrak{m}$.\\

\noindent \textbf{Step F:} \textit{The ramification filtration of $M/L$ is}
\begin{equation*}
H_0=H_1 = \dots = H_{1+q} \supsetneq \lbrace 1 \rbrace.
\end{equation*}
\noindent One has to show that $v_{M}(\mathcal{D}_{M/L})=q+2$, we will use freely results from \cite{Ser} IV. If $\rho_{n-1} = 0$, then according to \textbf{Step E}, one has
\begin{align*}
f_{\rho}(y)u^2=1+2y^{q/2}h+r,
\end{align*}
and one concludes using \cite{CM} Lemma 2.1. Else, if $\rho_{n-1} \neq 0$, one has
\begin{align*}
\max_{u\in L^{\times}}v_{L}(f_{\rho}(y)u^ 2-1) \geq v_L(\rho_{n-1}y^{1+q/2}),
\end{align*}
then  \cite{LRS} Lemma 6.3 implies that ${v_{M}(\mathcal{D}_{M/L}) \leq q+3}$. Using \textbf{Step B}, \textbf{Step D} and \cite{Ser} IV \S 2 Proposition 11, one has that the break in the ramification filtration of $M/L$ is congruent to $1 \md 2$, i.e. ${v_{M}(\mathcal{D}_{M/L}) \leq q+2}$. According to \textbf{Step D} and lemma \ref{maurizio}, the break $t$ of $M/L$  is in $1+q\N$. If $t=1$ then $G_2= \lbrace 1 \rbrace$ and $G_1/G_2 = G/G_2 \simeq G$  would be abelian, so $t \geq 1+q$, i.e. $v_M(\mathcal{D}_{M/L}) \geq q+2$.\\

\noindent \textbf{Step G :} \textsl{Computations of conductors.}

\noindent For $l \neq 2$ a prime number, the $G$-modules $\Jac(C)[l]$ and $\Jac(\mathcal{C}_k)[l]$ being isomorphic one has  that for $i \geq 0$ :
\[ \dim_{\F_l} \Jac(C)[l]^{G_i}= \dim_{\F_l}\Jac(\mathcal{C}_k)[l]^{G_i}.
\]
Moreover, for $0 \leq i \leq 1+q$ one has $\Jac(\mathcal{C}_k)[l]^{G_i} \subseteq \Jac(\mathcal{C}_k)[l]^{{\rm Z}(G)}$ , then from $\mathcal{C}_k/{\rm Z}(G) \simeq \Proj_k$ and lemma \ref{guralnick} it follows that  for $0 \leq i \leq 1+q$, $\dim_{\F_l}\Jac(\mathcal{C}_k)[l]^{G_i} = 0$. Since $g(C)=q/2$ one gets $f(\Jac(C)/K)=2q+1$ and $\sw(\Jac(C)/\Q_2^{\rm ur})=1$.
\end{proof}

\textbf{Example :} Magma codes are available on the author webpage. Let ${K:=\Q_2^{{\rm ur}}(2^{1/5})}$ and $f(X):=1+2^{6/5}X^2+2^{4/5}X^3+X^4+X^5 \in K[X]$, one checks that the smooth, projective, integral curve birationally given by ${Y^2=f(X)}$ has the announced properties, that is the wild monodromy $M/K$  has degree $32$ and one can describe its ramification filtration. The first program checks that \textbf{Step A} and \textbf{Step D} hold for this example. The second program checks \textbf{Step F} and is due to Guardia, J., Montes, J. and Nart, E. (see \cite{GMN}) and computes $v_M(\mathcal{D}_{M/\Q_2^{{\rm ur}}})=194$. Using \cite{Ser} III \S4  Proposition 8, one finds that $v_M(\mathcal{D}_{M/K})=66$, which was the announced result in Theorem \ref{maintheo} \textit{3}.\\

\noindent \textbf{Remarks : }
\begin{enumerate}
\item The above example was the main motivation for \textbf{Step F} since it shows that one could expect the correct behaviour for the ramification filtration of $\Gal(M/K)$ when $p=2$.
\item The naive method to compute the ramification filtration of $M/K$ in the above example fails. Indeed, in this case Magma needs a huge precision when dealing with $2$-adic expansions to get the correct discriminant.
\end{enumerate}

\noindent \textbf{Acknowledgements :} I would like to thank M. Monge for pointing out lemma \ref{maurizio}. 
\bibliographystyle{alpha} 
\bibliography{liftutf8v2}

\end{document}